\documentclass[smallextended]{svjour2}

\usepackage{amsmath, amstext, amssymb, amsfonts, graphicx, bm, subfigure}

\DeclareMathOperator{\Var}{\mathrm{Var}}
\newcommand{\PR}{\mathbb P}
\newcommand{\QR}{\mathbb Q}
\newcommand{\ER}{\mathbb E}

\DeclareMathOperator{\R}{\mathbb{R}}

\journalname{Journal of Statistical Physics}

\begin{document}

\title{Phase transitions in edge-weighted exponential random graphs: Near-degeneracy and universality}

\author{Ryan DeMuse \and Danielle Larcomb \and Mei Yin\thanks{Mei Yin's research was partially
supported by NSF grant DMS-1308333.}}

\institute{Department of Mathematics, University of
Denver, Denver, CO 80208, USA \\
              \email{mei.yin@du.edu}\\
}

\date{Received: date / Accepted: date}

\maketitle

\begin{abstract}
Conventionally used exponential random graphs cannot directly model weighted networks as the underlying probability space consists of simple graphs only. Since many substantively important networks are weighted, this limitation is especially problematic. We extend the existing exponential framework by proposing a generic common distribution for the edge weights. Minimal assumptions are placed on the distribution, that is, it is non-degenerate and supported on the unit interval. By doing so, we recognize the essential properties associated with near-degeneracy and universality in edge-weighted exponential random graphs.

\keywords{Exponential random graphs \and Legendre duality \and Phase transitions \and Near degeneracy and universality}

\subclass{05C80 \and 82B26}
\end{abstract}

\section{Introduction}
\label{intro}
Large networks have become increasingly popular over the last decades, and their modeling and investigation have led to interesting and new ways to apply statistical and analytical methods. Much of the random graph literature has evolved from the famous Erd\H{o}s-R\'{e}nyi graph, where edges are joined between vertices independently with the same probability. While the simple formation has attracted significant mathematical interest, this construction lacks the ability to model real world networks, which exhibit many noticeable attributes such as clustering and transitivity. The introduction of exponential random graphs has aided in this pursuit as they are able to capture a wide variety of common network tendencies by representing a complex global structure through a set of tractable local features \cite{FS} \cite{Newman} \cite{WF}. See Besag \cite{Besag}, Snijders et al. \cite{SPRH}, Rinaldo et al. \cite{RFZ}, and Fienberg \cite{Fienberg1} \cite{Fienberg2} for history and a review of developments.

These rather general models are exponential families of probability distributions over graphs, in which dependence between the random edges is defined through certain finite subgraphs. Inquiries into exponential random graphs have been made on the variational principle of the limiting normalization constant, concentration of the limiting probability distribution, phase transitions, and asymptotic structures. See for example Chatterjee and Varadhan \cite{CV}, Chatterjee and Diaconis \cite{CD1}, Radin and Yin \cite{RY}, Lubetzky and Zhao \cite{LZ1} \cite{LZ2}, Radin and Sadun \cite{RS1} \cite{RS2}, Radin et al. \cite{RRS}, Kenyon et al. \cite{KRRS}, Yin \cite{Yin2013}, Kenyon and Yin \cite{KY}, Aristoff and Zhu \cite{AZ2}, and Chatterjee and Dembo \cite{CD2}. Many of these papers utilize the elegant theory of graph limits as developed by Lov\'{a}sz and coauthors (V.T. S\'{o}s, B. Szegedy, C. Borgs, J. Chayes, K. Vesztergombi, \ldots) \cite{BCLSV1} \cite{BCLSV2} \cite{BCLSV3} \cite{Lov} \cite{LS}. Building on earlier work of Aldous \cite{Aldous1} and Hoover \cite{Hoover}, the graph limit theory creates a new set of tools for representing and studying the asymptotic behavior of graphs by connecting sequences of graphs $G_n$, which are discrete objects that lie in different probability spaces, to a unified graphon space $\mathcal{W}$, which is an abstract functional space equipped with a cut metric. Though the theory itself is tailored to dense graphs whose number of edges scales like the square of number of vertices, parallel theories for sparse graphs are likewise emerging. See Benjamini and Schramm \cite{BS}, Aldous and Steele \cite{AS}, Aldous and Lyons \cite{AL}, and Lyons \cite{Lyons} where the notion of local weak convergence is discussed and the recent works of Borgs et al. \cite{BCCZ1} \cite{BCCZ2} that are making progress towards enriching the existing $L^\infty$ theory of dense graph limits by developing a limiting object for sparse graph sequences based on $L^p$ graphons.

Despite their flexibility, conventionally used exponential random graphs suffer from some deficiencies that may hamper their utility to researchers. The major shortcomings are degeneracy problems, a sensitivity to missing data, and an inability to model weighted networks \cite{CD}. Since the underlying probability space of the standard exponential random graph model consists of simple graphs only yet many substantively important networks arising from a host of applications including socio-econometric data and neuroscience are weighted, this limitation is especially problematic. Consider a social network graph with vertices being people and edges indicating a relationship. We contemplate that family members have stronger relationships with one another than do workplace colleagues. This can be reflected by placing a weight on the edges to demonstrate some prior belief in the strength of connection, with coworkers having low weighted edges and family members having high weighted edges in-between. Properly adjusting the edge weights thus allows the modeling of a broad range of networks, be it consisting of more familial ties or more acquaintances.

An alternative interpretation for simple graphs is such that the edge weights are iid and satisfy a Bernoulli distribution. Following this perspective, Yin \cite{Yin2016} extended the exponential framework by putting a generic common distribution on the iid edge weights. After deriving a variational principle for the limiting normalization constant and an associated concentration of measure, an explicit characterization of the asymptotic phase transition was obtained for exponential models with uniformly distributed edge weights. This work expands upon the setting in \cite{Yin2016} and places minimal assumptions on the edge-weight distribution, that is, it is non-degenerate and supported on the unit interval. By doing so, we strive to discover universal asymptotic behavior, i.e. behavior that does not depend on the particular edge-weight distribution, for the model in the near-degenerate regions of the parameter space corresponding to where the graph is sparse (almost entirely unconnected) or nearly complete (almost fully connected) \cite{CD} \cite{Handcock} \cite{Yin}.

The rest of this paper is organized as follows. In Section \ref{background} we provide basics of graph limit theory and introduce key features of edge-weighted exponential random graphs. In Section \ref{Legendre} we summarize important properties of Legendre duality between the cumulant generating function and the Cram\'{e}r rate function for the edge-weight distribution. In Section \ref{maximization} we show the existence of a first order phase transition curve ending in a second order critical point in general edge-weighted exponential random graph models through a detailed analysis of a maximization problem for the normalization constant. Lastly, in Section \ref{asymptotics} we explore the universal and non-universal asymptotics concerning the phase transition.

\section{Background}
\label{background}
Consider the set $\mathcal{G}_n$ of all simple edge-weighted
complete labeled graphs $G_n$ on $n$ vertices (``simple'' means undirected, with no loops or multiple edges), where the edge weights
$x_{ij}$ between vertex $i$ and vertex $j$ are iid real random
variables satisfying a non-degenerate common distribution $\mu$ that is supported on $[0, 1]$. Any such graph
$G_n$, irrespective of the number of vertices, may be represented
as an element $h^{G_n}$ of a single abstract space $\mathcal{W}$
that consists of all symmetric measurable functions $h(x,y)$ from the unit square $[0,1]^2$ into the unit interval $[0, 1]$ (referred to as ``graph limits'' or ``graphons''), by setting $h^{G_n}(x,y)$ as the edge weight
between vertices $\lceil nx \rceil$ and $\lceil ny \rceil$ of
$G_n$. The common distribution $\mu$ for the
edge weights yields probability measure $\PR_n$ and the associated
expectation $\ER_n$ on $\mathcal{G}_n$, and further induces
probability measure $\QR_n$ on the space $\mathcal{W}$ under the
graphon representation.

For a finite simple graph $H$ with vertex set
$V(H)=[k]=\{1,...,k\}$ and edge set $E(H)$ and a simple graph
$G_n$ on $n$ vertices, there is a notion of density of graph
homomorphisms, denoted by $t(H, G_n)$, which indicates the
probability that a random vertex map $V(H) \to V(G_n)$ is
edge-preserving,
\begin{equation}
\label{t} t(H, G_n)=\frac{|\text{hom}(H,
G_n)|}{|V(G_n)|^{|V(H)|}}.
\end{equation}
For a graphon $h\in \mathcal{W}$, define the graphon homomorphism
density
\begin{equation}
\label{tt} t(H, h)=\int_{[0,1]^k}\prod_{\{i,j\}\in E(H)}h(x_i,
x_j)dx_1\cdots dx_k.
\end{equation}
Then $t(H, G_n)=t(H, h^{G_n})$ by construction, and we take
(\ref{tt}) with $h=h^{G_n}$ as the definition of graph homomorphism density $t(H,
G_n)$ for an edge-weighted complete graph $G_n$. This graphon
interpretation enables us to capture the notion of convergence in
terms of subgraph densities by an explicit ``cut distance'' on
$\mathcal{W}$:
\begin{equation}
d_{\square}(f, h)=\sup_{S, T \subseteq [0,1]}\left|\int_{S\times
T}\left(f(x, y)-h(x, y)\right)dx\,dy\right|
\end{equation}
for $f, h \in \mathcal{W}$. Except for a technical complication explained below, a sequence of edge-weighted
graphs converges under the cut metric if and only if its homomorphism densities
converge for all finite simple graphs, and the limiting
homomorphism densities then describe the resulting graphon.

The technical complication is that the topology induced by the cut
metric is well defined only up to measure preserving
transformations of $[0,1]$ (and up to sets of Lebesgue measure
zero), which may be thought of as a vertex relabeling in the context of
finite graphs. To tackle this issue, an equivalence relation
$\sim$ is introduced in $\mathcal{W}$. We say that $f\sim h$ if
$f(x, y)=h_{\sigma}(x, y):=h(\sigma x, \sigma y)$ for some measure
preserving bijection $\sigma$ of $[0,1]$. Let $\tilde{h}$
(referred to as a ``reduced graphon'') denote the equivalence
class of $h$ in $(\mathcal{W}, d_{\square})$. Since $d_{\square}$
is invariant under $\sigma$, one can then define on the resulting
quotient space $\widetilde{\mathcal{W}}$ the natural distance
$\delta_{\square}$ by $\delta_{\square}(\tilde{f},
\tilde{h})=\inf_{\sigma_1, \sigma_2}d_{\square}(f_{\sigma_1},
h_{\sigma_2})$, where the infimum ranges over all measure
preserving bijections $\sigma_1$ and $\sigma_2$, making
$(\widetilde{\mathcal{W}}, \delta_{\square})$ into a metric space.
With some abuse of notation we also refer to $\delta_{\square}$ as
the ``cut distance''. After identifying graphs that are the same
after vertex relabeling, the probability measure $\PR_n$ yields
probability measure $\tilde{\PR}_n$ and the associated expectation
$\tilde{\ER}_n$ (which coincides with $\ER_n$). Correspondingly,
the probability measure $\QR_n$ induces probability measure
$\tilde{\QR}_n$ on the space $\widetilde{\mathcal{W}}$ under the
measure preserving transformations. The space $(\widetilde{\mathcal{W}},
\delta_{\square})$ is a compact space and homomorphism densities $t(H, \cdot)$ are
continuous functions on it.

By a $2$-parameter family of edge-weighted exponential random graphs we mean a
family of probability measures $\PR_n^{\beta}$ on $\mathcal{G}_n$
defined by, for $G_n\in\mathcal{G}_n$,
\begin{equation}
\label{pmf} \PR_n^{\beta}(G_n)=\exp\left(n^2\left(\beta_1
t(H_1,G_n)+\beta_2 t(H_2,G_n)-\psi_n^{\beta}\right)\right)\PR_n(G_n),
\end{equation}
where $\beta=(\beta_1, \beta_2)$ are $2$ real parameters, $H_1$ is
a single edge, $H_2$ is a finite simple graph with $p\geq 2$
edges, $t(H_i, G_n)$ is the density of graph
homomorphisms, $\PR_n$ is the probability measure induced by the
common distribution $\mu$ for the edge weights, and
$\psi_n^{\beta}$ is the normalization constant (free energy density),
\begin{equation}
\label{psi} \psi_n^{\beta}=\frac{1}{n^2}\log \ER_n
\left(\exp\left(n^2 \left(\beta_1 t(H_1,G_n)+\beta_2
t(H_2,G_n)\right) \right)\right).
\end{equation}
Since homomorphism densities $t(H_i, G_n)$ are
preserved under vertex relabeling, the probability measure
$\tilde{\PR}_n^\beta$ and the associated expectation
$\tilde{\ER}_n^\beta$ (which coincides with $\ER_n^\beta$) may
likewise be defined.

Being exponential families with bounded support, one might expect exponential random graph models to enjoy a rather basic asymptotic form, though
in fact, virtually all these models are highly nonstandard as $n$ increases. The $2$-parameter edge-weighted exponential random graph models are
simpler than their $k$-parameter extensions but nevertheless exhibit a wealth of non-trivial characteristics and capture a variety of interesting features displayed by large networks. Furthermore, the relative simplicity provides insight into the expressive power of the exponential construction. In statistical physics, we refer to $\beta_1$ as the particle parameter and $\beta_2$ as the energy parameter. Accordingly, the exponential model (\ref{pmf}) is said to be ``attractive'' if $\beta_2$ is positive and ``repulsive'' if $\beta_2$ is negative. In this paper we will concentrate on ``attractive'' $2$-parameter models. The interest in these models is well justified. Consider the friendship graph for example, where the edge weights between different vertex pairs measure the strength of mutual friendship. Take $H_1$ an edge and $H_2$ a triangle. Since a friend of a friend is likely also a friend, the influence of a triangle that assesses the bond of a $3$-way friendship should be emphasized, and this corresponds to taking $\beta_2 \geq 0$. The edge-triangle model thus captures transitivity when $n$ is finite, but this transitivity is gradually lost when $n$ tends to infinity in the sense that the model produces a graph that looks similar to an Erd\H{o}s-R\'{e}nyi graph with respect to the cut metric (see detailed discussions in Sections \ref{maximization} and \ref{asymptotics}).


\section{Legendre transform and duality}
\label{Legendre}
In this section we present properties of the cumulant generating function $K(\theta)$ and the Cram\'{e}r rate function $I(u)$ for the edge-weight distribution $\mu$ relevant to our investigation. We will see that $K(\theta)$ is convex on $\R$, which allows the application of the Legendre transform. Let $I: A \to \R$ be the Legendre transform of $K$ given by
\begin{equation}
\label{cramer}
I(u) = \sup_{\theta \in \R} \left\{ \theta u - K(\theta) \right\},
\end{equation}
where $A$, the domain of $I$, consists of all $u$ so that $I(u)<\infty$. Note that in large deviation theory, $I$ is commonly referred to as the Cram\'{e}r conjugate rate function for the distribution $\mu$. It follows from theorems proved in Chapter 2: Analytic Properties of \cite{Brown} that the Legendre transform connecting $K$ and $I$ is an involution, $I$ is smooth and strictly convex everywhere it is defined, and there is a 1-1 relationship between $K$ and $I$. Lemma \ref{KK} and Proposition \ref{prop} then discuss properties of $K(\theta)$ and $I(u)$ under the additional assumption that $\mu$ is symmetric. These properties will be useful in Section \ref{asymptotics} when we explore universality in edge-weighted exponential random graphs.






\begin{lemma}
\label{KK}
Consider a non-degenerate probability measure $\mu$ supported on $[0, 1]$. Let $K(\theta)$ be the associated cumulant generating function. If $\mu$ is symmetric about the line $u=1/2$, then $K'''(0)K'(0)+(p-2)\left(K''(0)\right)^2 \geq 0$, and equality is obtained only when $p=2$.
\end{lemma}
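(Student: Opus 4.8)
The plan is to express the quantity purely in terms of the moments of $\mu$ by exploiting the fact that the derivatives of the cumulant generating function at the origin are precisely the cumulants of the distribution. Writing $K(\theta) = \log \int_0^1 e^{\theta u}\, d\mu(u)$ and differentiating directly, I would identify $K'(0)$ with the mean of $\mu$, $K''(0)$ with its variance, and $K'''(0)$ with its third central moment. Because $\mu$ has bounded support, the moment generating function is entire and these differentiations under the integral sign are entirely routine, so $K$ is smooth at $0$ with the stated moment interpretations.

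The crux is then to read off what the symmetry hypothesis buys us. Since $\mu$ is symmetric about $u = 1/2$, the centered variable $X - 1/2$ (with $X \sim \mu$) has a law symmetric about $0$, so all of its odd moments vanish; in particular its third central moment is zero, which gives $K'''(0) = 0$. This annihilates the first term $K'''(0)K'(0)$ outright, independently of the value of the mean $K'(0) = 1/2$, and collapses the whole expression to the single surviving term $(p-2)\left(K''(0)\right)^2$.

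It then remains to control this term, where I would invoke non-degeneracy of $\mu$: a measure not concentrated at a single point has strictly positive variance, so $K''(0) > 0$ and hence $\left(K''(0)\right)^2 > 0$. Together with the standing assumption $p \geq 2$, this yields $(p-2)\left(K''(0)\right)^2 \geq 0$, which is the claimed inequality. Moreover, since the squared factor is strictly positive, equality can occur only when the prefactor $p - 2$ vanishes, that is, only when $p = 2$, as asserted.

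I do not anticipate a genuine analytic obstacle here; the entire content of the lemma lies in translating the symmetry hypothesis into the vanishing of the odd cumulant $K'''(0)$. The only point deserving care is the justification that $K$ is thrice differentiable at $0$ with derivatives equal to the corresponding cumulants, and this is secured by the boundedness of the support of $\mu$, which makes the interchange of differentiation and integration unproblematic.
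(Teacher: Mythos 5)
Your proposal is correct and follows essentially the same route as the paper: both reduce the claim to showing $K'''(0)=0$ via the symmetry of $\mu$ about $u=1/2$ (the paper does this by explicitly writing $\mathbb{E}(X^3)=3\mathbb{E}(X^2)/2-1/4$ and substituting into the raw-moment formula for the third cumulant, whereas you invoke the vanishing of odd central moments directly), and then conclude from $p\geq 2$ and $\Var(X)>0$. Your treatment of the equality case is in fact slightly more explicit than the paper's, which simply states that the claim follows.
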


\begin{proof}
Let $X$ be a random variable distributed according to $\mu$. By symmetry, $\mathbb{E}(X)=1/2$ and $\mathbb{E}(X^3)=3\mathbb{E}(X^2)/2-1/4$. This implies that $K'(0)=\mathbb{E}(X)=1/2$ and $K''(0)=\mathbb{E}(X^2)-\left(\mathbb{E}(X)\right)^2=\mathbb{E}(X^2)-1/4$. Also,
\begin{equation}
K'''(0) = \mathbb{E}(X^3) - 3\mathbb{E}(X^2)\mathbb{E}(X) + 2\left(\mathbb{E}(X)\right)^3=0.
\end{equation}
The claim thus follows.
\end{proof}

\begin{lemma}
\label{domain}
Consider a non-degenerate probability measure $\mu$ supported on $[0, 1]$. Let $I(u)$ be the associated Cram\'{e}r rate function (\ref{cramer}). Then the domain of $I$ is a subset of $[0, 1]$.
\end{lemma}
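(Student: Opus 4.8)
The plan is to prove the contrapositive at the level of the defining supremum: I would show that whenever $u \notin [0,1]$ the quantity $\theta u - K(\theta)$ can be made arbitrarily large by a suitable choice of $\theta$, so that $I(u) = +\infty$ and hence $u \notin A$. The only structural input required is that $\mu$ is supported on $[0,1]$, so that a random variable $X \sim \mu$ satisfies $0 \le X \le 1$ almost surely; interestingly, non-degeneracy plays no role in this particular claim. This support condition converts directly into one-sided control on the cumulant generating function $K(\theta) = \log \mathbb{E}(e^{\theta X})$.

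First I would record two elementary bounds obtained by comparing the integrand against its extreme values. For $\theta \ge 0$, monotonicity of the exponential together with $X \le 1$ gives $e^{\theta X} \le e^{\theta}$, whence $K(\theta) \le \theta$. For $\theta \le 0$, the inequality $X \ge 0$ forces $\theta X \le 0$, so $e^{\theta X} \le 1$ and therefore $K(\theta) \le 0$. Each bound is tailored to one sign of $\theta$, and keeping the signs straight is really the only point that needs care.

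Next I would dispose of the two offending cases separately. If $u > 1$, I restrict the supremum in (\ref{cramer}) to $\theta \ge 0$ and invoke the first bound to obtain $\theta u - K(\theta) \ge \theta(u-1)$, which diverges to $+\infty$ as $\theta \to +\infty$ since $u - 1 > 0$; thus $I(u) = +\infty$. Symmetrically, if $u < 0$, I restrict to $\theta \le 0$ and invoke the second bound to get $\theta u - K(\theta) \ge \theta u = |\theta|\,|u|$, which diverges to $+\infty$ as $\theta \to -\infty$; again $I(u) = +\infty$. Combining the two cases yields $A \subseteq [0,1]$.

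There is no substantial obstacle to overcome: the entire argument rests on the compactness of the support, and the only thing one must be attentive to is pairing each exponential estimate with the matching half-line of $\theta$-values so that the linear term $\theta(u-1)$ or $\theta u$ drives the expression to $+\infty$. I would not expect to need any analytic properties of $K$ beyond the crude majorizations above.
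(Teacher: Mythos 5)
Your argument is correct and coincides with the paper's own proof: both derive the one-sided bounds $K(\theta) \le \theta$ for $\theta \ge 0$ and $K(\theta) \le 0$ for $\theta \le 0$ from the support condition, and then send $\theta \to +\infty$ (resp. $-\infty$) to force $I(u) = \infty$ when $u > 1$ (resp. $u < 0$). Your observation that non-degeneracy is not needed here is also accurate.
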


\begin{proof}
Since $\mu$ is supported on $\left[0,1\right]$, we have $0 \le K(\theta) \le \theta$ if $\theta \ge 0$, and $\theta \le K(\theta) \le 0$ if $\theta \le 0$.
This gives
\begin{align}
I(u) &= \sup{\left\{ \sup_{\theta \ge 0}\left\{ {\theta}u - K(\theta) \right\} , \sup_{\theta \le 0}\left\{ {\theta}u - K(\theta) \right\}  \right\}} \\
	&\ge \sup{\left\{ \sup_{\theta \ge 0}\left\{ {\theta}\left(u - 1\right) \right\} , \sup_{\theta \le 0}\left\{ {\theta}u \right\} \right\}}. \nonumber
\end{align}
If $u > 1$ then $\sup_{\theta \ge 0} \left\{{\theta}\left(u-1\right) \right\} = \infty$ and thus $I(u)$ is not finite. Similarly, if $u < 0$ then $\sup_{\theta \le 0}\left\{{\theta}u \right\} = \infty$ and thus $I(u)$ is not finite. The conclusion readily follows.
\end{proof}




\begin{table}
\begin{equation*}
\begin{array}{cc}
\text{Limiting Properties of } K(\theta) & \theta \text{ limit } \\
\hline \hline \\
K(\theta) \to -\infty \text{ or } l < 0 & \theta \to -\infty \\
K'(\theta) \to 0 & \theta \to -\infty \\
K''(\theta) \to 0 & \theta \to -\infty \\
K(\theta) \to \infty & \theta \to \infty \\
K'(\theta) \to 1 & \theta \to \infty \\
K''(\theta) \to 0 & \theta \to \infty \\ \\
\end{array}
\end{equation*}
\caption{Limiting properties of $K(\theta)$ as $\theta \rightarrow \pm \infty$.} \label{table}
\end{table}

Analyzing properties of $K(\theta)$ and $I(u)$ in detail will give a stronger conclusion than Lemma \ref{domain}. We recognize that the cumulant generating function $K(\theta)$ satisfies $K(0) = 0$, $K'(0) = \mathbb{E}(X)$, and $K''(0)=\Var(X)$, where $X$ is a random variable distributed according to $\mu$. See Table \ref{table} for important limiting properties of $K(\theta)$ as $\theta \rightarrow \pm \infty$. By Legendre duality, every $u \in (0,1)$ uniquely corresponds to a $\theta \in \left(-\infty,\infty\right)$, with $K'(\theta)=u$ and $I'(u)=\theta$. This implies that $I(\mathbb{E}(X))=I'(\mathbb{E}(X))=0$, and $I(u)$ is decreasing on $(0,\mathbb{E}(X))$ and increasing on $(\mathbb{E}(X),1)$. We also note that $I(0)$ and $I(1)$, depending on the probability distribution $\mu$, may be either finite or grow unbounded. In the former case, the domain of $I$ is $[0, 1]$ (as for Bernoulli$(.5)$). In the latter case, the domain of $I$ is $(0, 1)$ (as for Uniform$(0, 1)$).


\begin{proposition}
\label{prop}
Consider a non-degenerate probability measure $\mu$ supported on $[0, 1]$. Let $I(u)$ be the associated Cram\'{e}r rate function (\ref{cramer}). If $\mu$ is symmetric about the line $u=1/2$, then $I(u)$ is also symmetric about the line $u=1/2$.
\end{proposition}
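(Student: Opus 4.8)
\emph{Proof proposal.} The plan is to translate the symmetry of $\mu$ into a functional equation for the cumulant generating function $K$, and then to transfer that symmetry to $I$ through a change of variables inside the defining supremum (\ref{cramer}). The whole argument is driven by the observation that transforming the \emph{random variable} by $X \mapsto 1-X$ corresponds, on the dual side, to transforming the \emph{parameter} by $\theta \mapsto -\theta$.

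First I would record what symmetry of $\mu$ about $u=1/2$ means probabilistically: if $X$ is distributed according to $\mu$, then $X$ and $1-X$ have the same law. Computing moment generating functions then gives $\mathbb{E}(e^{\theta X}) = \mathbb{E}(e^{\theta(1-X)}) = e^{\theta}\,\mathbb{E}(e^{-\theta X})$ for every $\theta \in \R$, and taking logarithms yields the key identity $K(\theta) = \theta + K(-\theta)$, equivalently $K(-\theta) = K(\theta) - \theta$. This is the analytic incarnation of the symmetry of $\mu$, and it is the fact that does all the work.

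Next I would substitute this identity into $I(1-u) = \sup_{\theta \in \R}\{\theta(1-u) - K(\theta)\}$. Performing the replacement $\theta \mapsto -\theta$, which is a bijection of $\R$ and so leaves the supremum over the same index set, the objective becomes $-\theta(1-u) - K(-\theta)$; substituting $K(-\theta) = K(\theta) - \theta$ and simplifying collapses this to $\theta u - K(\theta)$. Taking the supremum therefore returns exactly $I(u)$, which establishes $I(1-u) = I(u)$ for all $u$, i.e. symmetry of $I$ about $u=1/2$. As a byproduct the argument shows that the domain $A$ of $I$ is itself symmetric about $1/2$, consistent with Lemma \ref{domain}.

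The main step to get right is the passage from the distributional symmetry to the functional equation $K(\theta) = \theta + K(-\theta)$; once that is in hand the manipulation of the supremum is routine algebra, requiring only that $\theta \mapsto -\theta$ is a bijection of $\R$ so that no restriction of range spoils the identification of the two suprema. I do not anticipate a genuine obstacle here, since the functional equation holds for all $\theta$ and the convexity and finiteness facts needed to make sense of $I$ are already supplied by the Legendre duality recalled in this section.
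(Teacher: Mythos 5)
Your proposal is correct. The first half --- deriving the functional equation $K(-\theta)=K(\theta)-\theta$ from the distributional identity $X\overset{d}{=}1-X$ --- is exactly the computation in the paper's proof (its equation (\ref{K}), obtained by the change of variable $x=1-y$ inside the integral). Where you diverge is in the second half: the paper passes through Legendre duality pointwise, writing $u=K'(\theta)$ for a unique dual $\theta$, observing that $1-u$ and $-\theta$ are then duals of each other, and verifying $I(u)=\theta K'(\theta)-K(\theta)=(-\theta)K'(-\theta)-K(-\theta)=I(1-u)$; you instead substitute $\theta\mapsto-\theta$ directly into the defining supremum $I(1-u)=\sup_\theta\{\theta(1-u)-K(\theta)\}$ and collapse the objective to $\theta u-K(\theta)$. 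Your route is slightly more elementary and a bit more general: it needs only that $\theta\mapsto-\theta$ is a bijection of $\R$, not the smooth one-to-one correspondence $u\leftrightarrow\theta$ supplied by duality, so it establishes $I(1-u)=I(u)$ on all of the domain $A$ (including the endpoints $0$ and $1$ when $I$ is finite there), whereas the paper's argument as written covers $u\in(0,1)$. The paper's version, on the other hand, makes explicit the dual pairing $(u,\theta)\leftrightarrow(1-u,-\theta)$, which is reused later in Section \ref{asymptotics}. Both are sound; no gap.
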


\begin{proof}
Let $\theta \in \R$. Under the symmetry assumption, we will show, by a simple change of variable $x = 1 - y$, that $K(-\theta) = -\theta + K(\theta)$.
\begin{align}
\label{K}
K(-\theta) &= \log\int e^{-\theta x} \mu(dx) = \log\int e^{-\theta (1-y)} \mu(dy) \\
	&= \log\int e^{-\theta} e^{\theta y} \mu(dy) = -\theta + K(\theta). \nonumber
\end{align}
Let $u \in (0,1)$. Following Legendre duality, $u = K'(\theta)$ for a unique $\theta$. By (\ref{K}), this implies that $1-u=1-K'(\theta)=K'(-\theta)$, i.e., $1-u$ and $-\theta$ are unique duals of each other. We compute
\begin{align}
I(u) &= \theta K'(\theta) - K(\theta) \\
	&= \theta \left(1-K'(-\theta)\right) - \left( K(-\theta) + \theta \right) \nonumber \\
	&= (-\theta) K'(-\theta) - K(-\theta)=I(1-u). \nonumber
\end{align}
This verifies our claim.
\end{proof}


\section{Maximization analysis}
\label{maximization}
In this section we demonstrate the existence of first order phase transitions in general edge-weighted exponential random graphs. Our main results are Theorem \ref{phase} and the consequent Corollary \ref{corollary}. In the standard statistical physics literature, phase transition is often associated with loss of analyticity in the normalization constant, which gives rise to discontinuities in the observed graph
statistics. In the vicinity of a phase transition, even a tiny change in some local feature can result in a dramatic change of the entire system.

\begin{definition}
A phase is a connected region of the parameter space $\{\beta\}$,
maximal for the condition that the limiting normalization constant
$\psi_\infty^\beta:=\lim_{n\rightarrow \infty} \psi_n^\beta$ is analytic. There is a $j$th-order transition
at a boundary point of a phase if at least one $j$th-order partial
derivative of $\psi_\infty^\beta$ is discontinuous there, while
all lower order derivatives are continuous.
\end{definition}

Following this philosophy, we will make use of two theorems from \cite{Yin2016}, which connect the occurrence of an asymptotic phase transition in
our model with the solution of a certain maximization problem for the limiting normalization constant.

\begin{theorem}[Theorem 3.4 in \cite{Yin2016}]
\label{CD} Consider a general $2$-parameter exponential random
graph model (\ref{pmf}). Suppose $\beta_2$ is
non-negative. Then the limiting normalization constant
$\psi_\infty^\beta$ exists, and is given by
\begin{equation}
\label{lmax} \psi_{\infty}^{\beta}=\sup_{u}\left(\beta_1
u+\beta_2 u^{p}-\frac{1}{2}I(u)\right)\\,
\end{equation}
where $H_2$ is a simple graph with $p\geq 2$ edges, $I$ is the
Cram\'{e}r rate function (\ref{cramer}), and the supremum is
taken over all $u$ in the domain of $I$, i.e., where $I<\infty$.
\end{theorem}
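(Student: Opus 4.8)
The plan is to obtain the variational formula (\ref{lmax}) in three stages: a large deviation principle (LDP) for the random graphon, an application of Varadhan's lemma to convert the LDP into an optimization over graphon space, and finally a reduction of that infinite-dimensional problem to the scalar supremum over $u$. To begin, I would establish an LDP for the reduced empirical graphon $\tilde{h}^{G_n}$ under $\tilde{\QR}_n$ on the compact metric space $(\widetilde{\mathcal{W}}, \delta_{\square})$, at speed $n^2$ and with good rate function
\begin{equation*}
J(\tilde{h}) = \frac{1}{2}\int_{[0,1]^2} I(h(x,y))\,dx\,dy,
\end{equation*}
where $I$ is the Cram\'{e}r rate function (\ref{cramer}). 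The heuristic is transparent: the $\binom{n}{2}$ edge weights are iid with law $\mu$, so by Cram\'{e}r's theorem the cost of forcing them into a profile approximating $h$ is $\sum_{i<j} I(h(i/n,j/n))$, and this Riemann sum converges to $J(\tilde{h})$, the factor $\tfrac{1}{2}$ arising from the area of the region $\{x<y\}$ together with the symmetry of $h$. Turning this into a genuine LDP on $(\widetilde{\mathcal{W}}, \delta_{\square})$ is the technical backbone: I would block-discretize the graphon, apply finite-dimensional Cram\'{e}r bounds to the blocks, and pass to the limit, using the continuity of the densities $t(H,\cdot)$ and the compactness of $\widetilde{\mathcal{W}}$ recorded in Section \ref{background} to control the cut-metric topology. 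This is the edge-weighted analogue of the Chatterjee--Varadhan LDP for Erd\H{o}s--R\'{e}nyi graphs.

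Next I would note that $T(\tilde{h}) := \beta_1 t(H_1,\tilde{h}) + \beta_2 t(H_2,\tilde{h})$ is bounded, since all homomorphism densities lie in $[0,1]$, and $\delta_{\square}$-continuous, again by the continuity of $t(H,\cdot)$. Because $(\widetilde{\mathcal{W}}, \delta_{\square})$ is compact and the LDP holds with a good rate function, Varadhan's integral lemma applies to the functional $T$ and, recalling that $\psi_n^\beta = \tfrac{1}{n^2}\log \tilde{\ER}_n\!\left(\exp(n^2 T(\tilde{h}^{G_n}))\right)$ with $\tilde{\ER}_n = \ER_n$, yields
\begin{equation*}
\psi_\infty^\beta = \sup_{\tilde{h}}\left(T(\tilde{h}) - J(\tilde{h})\right) = \sup_h\left(\beta_1 t(H_1,h) + \beta_2 t(H_2,h) - \frac{1}{2}\int_{[0,1]^2} I(h)\,dx\,dy\right).
\end{equation*}
In particular the limit defining $\psi_\infty^\beta$ exists.

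It remains to collapse this graphon problem onto constants, and here the hypothesis $\beta_2 \ge 0$ is indispensable. Restricting to constant graphons $h\equiv u$ yields $t(H_1,h)=u$, $t(H_2,h)=u^p$, and $J=\tfrac{1}{2}I(u)$, so the lower bound $\psi_\infty^\beta \ge \sup_u(\beta_1 u + \beta_2 u^p - \tfrac{1}{2}I(u))$ is immediate. The matching upper bound is the crux, and it is emphatically not a term-by-term comparison: writing $u = \int_{[0,1]^2} h$, the convexity of $I$ gives $\tfrac{1}{2}\int_{[0,1]^2} I(h) \ge \tfrac{1}{2}I(u)$, which favors constants, yet $t(H_2,h)$ can strictly exceed $u^p$ when $h$ is inhomogeneous (triangle-type counts concentrate in dense clusters), which favors inhomogeneity. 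The entire content of the reduction is that when $\beta_2 \ge 0$ the entropic penalty always outweighs this energetic gain, so that a maximizer of $T - J$ — which exists by compactness and upper semicontinuity — can be taken constant. Making that trade-off rigorous, via a symmetrization argument or a H\"{o}lder/Finner-type estimate bounding $t(H_2,h)$ against $\int_{[0,1]^2} I(h)$, is where I expect the genuine difficulty to lie, considerably more so than in the LDP and Varadhan steps, which follow a by-now standard template.
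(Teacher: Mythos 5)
First, a point of reference: the paper does not prove this statement itself; it is imported verbatim as Theorem 3.4 of \cite{Yin2016}, whose proof follows exactly the architecture you describe (an edge-weighted analogue of the Chatterjee--Varadhan LDP with good rate function $\frac{1}{2}\iint I(h)$, Varadhan's lemma applied to the bounded $\delta_\square$-continuous functional $T$, and a reduction to constant graphons using $\beta_2\geq 0$). Your first two stages are therefore on target. The gap is in the third stage, which you explicitly leave open and, more importantly, mis-aim: you look for an estimate ``bounding $t(H_2,h)$ against $\int I(h)$'' and frame the reduction as a delicate trade-off between the entropic gain from averaging and the energetic gain from inhomogeneity, measured relative to the mean $u=\iint h$. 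The actual argument never compares $h$ to its mean and involves no such trade-off. The key inequality is purely on the energy side: for any finite simple graph $H$ with $p$ edges and any graphon $h$,
\begin{equation*}
t(H,h)\;\le\;\int_{[0,1]^2} h(x,y)^{p}\,dx\,dy,
\end{equation*}
which follows from the generalized H\"{o}lder (Finner) inequality with weight $1/p$ assigned to each edge factor (each vertex lies in at most $p$ of the $p$ edges, so the exponent condition $\sum_{e\ni v}w_e\le 1$ holds); for stars it reduces to Jensen's inequality. Since $\beta_2\ge0$ while $t(H_1,h)=\iint h$ holds exactly, this gives
\begin{equation*}
\beta_1 t(H_1,h)+\beta_2 t(H_2,h)-\frac{1}{2}\int_{[0,1]^2} I(h)\;\le\;\int_{[0,1]^2}\Bigl(\beta_1 h+\beta_2 h^{p}-\tfrac{1}{2}I(h)\Bigr)dx\,dy\;\le\;\sup_{u}\Bigl(\beta_1 u+\beta_2 u^{p}-\tfrac{1}{2}I(u)\Bigr),
\end{equation*}
the last step being a pointwise bound on the integrand (points where $I(h(x,y))=\infty$ only help the inequality). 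Combined with your lower bound from constant graphons this closes the proof. So the reduction is a few lines once the H\"{o}lder step is in place, and the genuinely hard technical content lives in the LDP, not in the collapse to constants --- the opposite of your assessment of where the difficulty lies.
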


\begin{theorem}[Theorem 3.5 in \cite{Yin2016}]
\label{gen} Let $G_n$ be an exponential random graph drawn from
(\ref{pmf}). Suppose $\beta_2$ is non-negative. Then
$G_n$ behaves like an Erd\H{o}s-R\'{e}nyi graph $G(n, u)$ in the
large $n$ limit:
\begin{equation}
\label{behave}
\lim_{n\rightarrow \infty}\delta_\square(\tilde{h}^{G_n},
\tilde{u})=0 \text{ almost surely},
\end{equation}
where $u$ is picked randomly from the set $U$ of
maximizers of (\ref{lmax}).
\end{theorem}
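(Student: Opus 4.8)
The plan is to derive the stated concentration from a large deviation principle (LDP) for the random graphon under the base measure, followed by Varadhan's integral lemma and a Borel--Cantelli argument. The compactness of $(\widetilde{\mathcal{W}}, \delta_\square)$ and the continuity of the densities $t(H_i, \cdot)$, both recorded in Section \ref{background}, are the structural facts that make this machinery applicable.

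First I would establish the base-measure LDP: under $\tilde{\QR}_n$, the random reduced graphon $\tilde{h}^{G_n}$ obeys an LDP on the compact space $(\widetilde{\mathcal{W}}, \delta_\square)$ at speed $n^2$ with good rate function $\tilde{I}(\tilde{h}) = \frac{1}{2}\int_{[0,1]^2} I(h(x,y))\,dx\,dy$, where $I$ is the Cram\'{e}r rate function (\ref{cramer}). This is the edge-weighted analogue of the Chatterjee--Varadhan large deviation theorem for Erd\H{o}s--R\'{e}nyi graphs, the only structural change being that the Bernoulli relative-entropy density is replaced by the Cram\'{e}r rate density $I$ attached to the general edge-weight law $\mu$.

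Next, since $t(H_1, \cdot)$ and $t(H_2, \cdot)$ are bounded and continuous on $(\widetilde{\mathcal{W}}, \delta_\square)$, the tilting functional $T(\tilde{h}) := \beta_1 t(H_1, h) + \beta_2 t(H_2, h)$ is bounded and continuous, so Varadhan's integral lemma applies to the exponential tilt (\ref{pmf}). It yields simultaneously the graphon variational formula $\psi_\infty^\beta = \sup_{\tilde{h}}\bigl(T(\tilde{h}) - \tilde{I}(\tilde{h})\bigr)$ and the fact that the tilted law $\tilde{\PR}_n^\beta$ satisfies an LDP at speed $n^2$ with rate function $\tilde{I}(\tilde{h}) - T(\tilde{h}) + \psi_\infty^\beta$. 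Invoking the reduction underlying Theorem \ref{CD}, namely replica symmetry in the attractive regime $\beta_2 \ge 0$ (a consequence of the convexity of $I$ together with $\beta_2 \ge 0$), I would identify the zero set of this rate function, equivalently the set of graphon maximizers of $T - \tilde{I}$, with the constant graphons $U^* := \{\tilde{u} : u \in U\}$, where $U$ is the set of scalar maximizers of (\ref{lmax}).

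Finally I would run the standard concentration step. Fix $\epsilon > 0$ and let $A_\epsilon = \{\tilde{h} : \delta_\square(\tilde{h}, U^*) \ge \epsilon\}$, a closed, hence compact, subset of $\widetilde{\mathcal{W}}$. Because $T - \tilde{I}$ is upper semicontinuous and attains its maximum $\psi_\infty^\beta$ only on $U^*$, compactness gives $\sup_{A_\epsilon}(T - \tilde{I}) \le \psi_\infty^\beta - \delta$ for some $\delta = \delta(\epsilon) > 0$. The LDP upper bound for the tilted law then yields $\tilde{\PR}_n^\beta(A_\epsilon) \le \exp(-\delta n^2/2)$ for all large $n$; summability over $n$ and Borel--Cantelli give $\delta_\square(\tilde{h}^{G_n}, U^*) \to 0$ almost surely, which is precisely (\ref{behave}). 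The main obstacle is the base-measure LDP of the first step: for Bernoulli edges this is the Chatterjee--Varadhan theorem, whose proof rests on Szemer\'{e}di's regularity lemma and delicate combinatorial tail estimates, and carrying it through for an arbitrary non-degenerate $\mu$ on $[0,1]$ requires verifying that the regularity-partition scheme still controls the cut metric for continuously distributed edge weights and that the resulting rate function is exactly $\frac{1}{2}\int I(h)$. Everything downstream of that LDP is routine.
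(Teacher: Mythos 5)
The paper does not actually prove this statement---it is imported wholesale as Theorem 3.5 of \cite{Yin2016}---and your outline reproduces the strategy of that source and of its unweighted antecedent in Chatterjee--Diaconis \cite{CD1}: an edge-weighted Chatterjee--Varadhan LDP at speed $n^2$ with good rate function $\tfrac{1}{2}\iint I(h(x,y))\,dx\,dy$, Varadhan's lemma for the bounded continuous tilt on the compact space $(\widetilde{\mathcal{W}},\delta_\square)$, the replica-symmetric reduction of the zero set of the tilted rate function to constant graphons when $\beta_2\geq 0$ (where the working mechanism is the pointwise bound $t(H_2,h)\leq\iint h^p$ from AM--GM together with the strict convexity of $I$, slightly more than the convexity you cite), and the compactness-plus-Borel--Cantelli concentration step. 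Your proposal is correct in approach and correctly isolates the base-measure LDP for general $\mu$ as the only genuinely hard ingredient.
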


To be more precise, Theorems \ref{CD} and \ref{gen} indicate that a typical graph drawn from the exponential random graph model is {\it weakly pseudorandom} \cite{BBS}. Weakly pseudorandomness means that with exponentially high probability, a sampled graph satisfies a number of equivalent properties such as large spectral gap and correct number of all subgraph counts that make it very similar to an Erd\H{o}s-R\'enyi graph. Some authors have delved deeper into this ``asymptotically equivalent'' phenomenon. Mukherjee \cite{M} considered the two star model in \cite{M} and found that though the model looks like an Erd\H{o}s-R\'enyi mixture in cut distance, the same convergence does not go through in total variation. This says that despite being very close to Erd\H{o}s-R\'enyi, a graph sampled from the exponential distribution is not exactly Erd\H{o}s-R\'enyi. In the case of edge-triangle model, Radin and Sadun \cite{RS2} argued that the two-parameter to one-parameter reduction and the loss of information is essentially due to the inequivalence of grand canonical and microcanonical ensembles of the exponential model in the asymptotic regime. From a practical perspective, however, the Erd\H{o}s-R\'enyi approximation for the exponential random graph is already good enough, and we may simply picture an exponential random graph as an Erd\H{o}s-R\'enyi graph in the large graph ``attractive'' limit \cite{CD1}.

A significant part of computing phase boundaries for the $2$-parameter exponential model is then
a detailed analysis of a calculus problem coupled with probability
estimates. However, as straightforward as it sounds, since
the exact form of the Cram\'{e}r rate function $I$ is not readily obtainable for a generic edge-weight distribution $\mu$, getting a clear picture of the asymptotic phase structure is not that easy and various tricks, especially the duality principle for the Legendre transform, need to be employed \cite{ZRM}. We note that our mechanism for $2$-parameter models may be further generalized to a $k$-parameter setting, and the crucial idea is to minimize the effect of the ordered parameters on
the limiting normalization constant one by one. See \cite{Yin2013} for an illustration of this procedure in the standard exponential random graph model (where $\mu$ is Bernoulli$(.5)$).

\vskip.1truein

\noindent \textbf{Assumption}
Let $p$ be the number of edges in $H_2$. Denote by $K(\theta)$ the cumulant generating function associated with the probability measure $\mu$.
We place a technical assumption:
\begin{equation}
K'''(\theta)K'(\theta) = -(p-2)\left( K''(\theta) \right)^2
\end{equation}
admits only one zero on $\R$.

We remark that this requirement on $\mu$, which is satisfied by many common distributions including Bernoulli$(.5)$ and Uniform$(0, 1)$ etc., is just a technicality that\ guarantees the existence of a unique phase transition curve. Without this assumption, there may be more than one phase transition curve. Still, all phase transition curves display the same asymptotical behavior as described in (\ref{erd}), and all graph samples drawn from the ``attractive'' region of the parameter space are approximately Erd\H{o}s-R\'enyi (but with varying densities). The parameter space therefore consists of a single (Erd\H{o}s-R\'enyi) phase with first order phase transition(s) across one (or more) curves and second order phase transition(s) along the boundaries, and the transitions correspond to a change in density of the Erd\H{o}s-R\'enyi graph.

The meaning of a phase transition in the exponential model thus deserves some careful re-examination. As will be shown in Theorem \ref{phase}, there are curves approaching the phase transition curve from either side along which the corresponding weakly pseudorandom Erd\H{o}s-R\'{e}nyi distribution stays constant, and a jump in the Erd\H{o}s-R\'{e}nyi parameter $u$ occurs only when the phase transition curve is crossed. This implies that asymptotically the state of the network (represented by $u$) does not have a one-to-one correspondence with the associated exponential parameter $\beta$. (The same defect was observed by Chatterjee and Diaconis \cite{CD1} in the unweighted situation.) Some intricate differences between the exponential model and the related Erd\H{o}s-R\'{e}nyi model are presented in Sections \ref{maximization} and \ref{asymptotics}, particularly through the calculations after Theorem \ref{lim2}. The last equation (\ref{last}) offers a possible way of distinguishing among ``equivalent'' exponential parameters $\beta$, since same Erd\H{o}s-R\'{e}nyi parameter $u$ but different model parameter $\beta_2$ lead to different limiting normalization constant in the exponential model, which encodes important asymptotic information about the system.

Given an observed network that one wishes to model using an exponential random graph model, there may be many parameter values yielding the same weakly pseudorandom Erd\H{o}s-R\'{e}nyi distribution, and practitioners need to determine what is a best choice. Ideally, those parameters would generate a model whose measurements from simulated realizations reflect the observed network as accurately as possible in every aspect (not just the correct number of subgraph counts as determined by the Erd\H{o}s-R\'{e}nyi parameter). Restrictions on the run time of the data collection process may be further imposed. These practical considerations have led to continued interest and advances both theoretically and experimentally in improving goodness of fit and parameter learning \cite{Handcock} \cite{Hunter} and developing better model specifications \cite{SPRH}. For a general principle, good models should produce networks that are structurally similar to the observed network using few but effective parameters, while bad models produce networks that bear little resemblance to the observed network using many unnecessary parameters.

\begin{theorem}
\label{phase}
Suppose the common distribution $\mu$ for the edge weights is supported on $[0, 1]$ and non-degenerate. For any allowed $H_2$, the limiting normalization
constant $\psi_\infty^\beta$ of (\ref{pmf}) is analytic at all $(\beta_1,
\beta_2)$ in the upper half-plane $(\beta_2\geq 0)$ except on a
certain decreasing curve $\beta_2=r(\beta_1)$ which includes the
endpoint $(\beta_1^c, \beta_2^c)$. The derivatives
$\frac{\partial}{\partial \beta_1}\psi_\infty^\beta$ and
$\frac{\partial}{\partial \beta_2}\psi_\infty^\beta$ have (jump)
discontinuities across the curve, except at the end point where all the second derivatives $\frac{\partial^2}{\partial
\beta_1^2}\psi_\infty^\beta$, $\frac{\partial^2}{\partial \beta_1
\partial \beta_2}\psi_\infty^\beta$ and $\frac{\partial^2}{\partial
\beta_2^2}\psi_\infty^\beta$ diverge.
\end{theorem}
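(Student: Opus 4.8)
The plan is to reduce the analyticity question for $\psi_\infty^\beta$ to the maximization problem supplied by Theorem \ref{CD}. Write
\[
\ell_\beta(u) = \beta_1 u + \beta_2 u^p - \tfrac{1}{2} I(u),
\]
so that $\psi_\infty^\beta = \sup_u \ell_\beta(u)$, the supremum taken over the domain of $I$. Since $I$ is smooth and strictly convex with $I'(u)=\theta\to-\infty$ as $u\to0^+$ and $I'(u)=\theta\to+\infty$ as $u\to1^-$ (Legendre duality together with Table \ref{table}), while the remaining terms are polynomial, we get $\ell_\beta'(u)\to+\infty$ at the left endpoint and $\ell_\beta'(u)\to-\infty$ at the right endpoint. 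Hence for every $\beta$ the supremum is attained at an interior critical point, regardless of whether the domain is open or closed at the endpoints, and the whole analysis reduces to the stationarity equation $\ell_\beta'(u)=\beta_1+p\beta_2 u^{p-1}-\tfrac{1}{2}I'(u)=0$, which is jointly analytic in $(u,\beta)$.

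First I would isolate the analytic region. Wherever $\ell_\beta$ has a unique global maximizer $u^*(\beta)$ that is nondegenerate ($\ell_\beta''(u^*)<0$), the implicit function theorem makes $u^*(\beta)$ real-analytic, so $\psi_\infty^\beta=\ell_\beta(u^*(\beta))$ is analytic, and the envelope identities $\partial_{\beta_1}\psi_\infty^\beta=u^*$, $\partial_{\beta_2}\psi_\infty^\beta=(u^*)^p$ hold. Thus non-analyticity can occur only where the global maximizer either jumps between competing maxima (first order) or degenerates, $\ell_\beta''=0$ (critical point). I would locate the degenerate points by solving $\ell_\beta'=\ell_\beta''=0$, and the terminal second-order point by adjoining $\ell_\beta'''=0$. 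Using $I''(u)=1/K''(\theta)$ and $I'''(u)=-K'''(\theta)/\bigl(K''(\theta)\bigr)^3$ from Legendre duality, with $u=K'(\theta)$, the system $\ell_\beta''=\ell_\beta'''=0$ collapses exactly to
\[
K'''(\theta)K'(\theta)+(p-2)\bigl(K''(\theta)\bigr)^2=0,
\]
which under the \textbf{Assumption} has a unique root $\theta^c$. This pins down a single critical endpoint $(\beta_1^c,\beta_2^c)$, with $\beta_2^c=I''(u^c)/\bigl(2p(p-1)(u^c)^{p-2}\bigr)>0$ by strict convexity of $I$ (consistent with Lemma \ref{KK} in the symmetric case).

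Next I would build the first-order curve. For $\beta_2$ above $\beta_2^c$ the map $u\mapsto\tfrac{1}{2}I'(u)-p\beta_2 u^{p-1}$ is non-monotone, so for $\beta_1$ in an interval the stationarity equation has two stable solutions $u_1(\beta)<u_2(\beta)$; the global maximizer is whichever gives the larger $\ell_\beta$, and the transition locus is the equal-height set $g(\beta):=\ell_\beta(u_2)-\ell_\beta(u_1)=0$. By the envelope identities $\partial_{\beta_1}g=u_2-u_1>0$ and $\partial_{\beta_2}g=u_2^p-u_1^p>0$, so $g=0$ defines a curve $\beta_2=r(\beta_1)$ with
\[
r'(\beta_1)=-\frac{u_2-u_1}{u_2^p-u_1^p}<0,
\]
strictly decreasing and terminating at $(\beta_1^c,\beta_2^c)$ where $u_1,u_2$ coalesce. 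Across it the maximizer jumps from $u_1$ to $u_2$, so $\partial_{\beta_1}\psi_\infty^\beta=u^*$ and $\partial_{\beta_2}\psi_\infty^\beta=(u^*)^p$ acquire matching jump discontinuities, while for $0\le\beta_2<\beta_2^c$ the stationarity equation has a unique nondegenerate solution and $\psi_\infty^\beta$ is analytic. At the endpoint the maximizer satisfies $\ell_\beta'=\ell_\beta''=\ell_\beta'''=0$ with $\ell_\beta''''<0$ (a quartically flat maximum), so a local Taylor expansion of the stationarity equation gives $u^*(\beta)-u^c\sim c\,(\beta-\beta^c)^{1/3}$, whence $\partial_{\beta_i}u^*\sim(\beta-\beta^c)^{-2/3}$ and, through the envelope identities, $\partial^2_{\beta_1^2}\psi_\infty$, $\partial^2_{\beta_1\beta_2}\psi_\infty$, $\partial^2_{\beta_2^2}\psi_\infty$ all diverge, giving the second-order critical point.

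The main obstacle I anticipate is the global bookkeeping in the middle step: showing that above $\beta_2^c$ there are \emph{exactly} two competing stable maxima rather than more, that the equal-height set forms a single connected decreasing curve ending precisely at the coalescence point, and that no further degenerate critical points intervene. This is exactly where the \textbf{Assumption} does the heavy lifting, since uniqueness of the root of the displayed $K$-identity rules out additional folds in the spinodal and hence additional transition curves; verifying $\ell_\beta''''<0$ at $u^c$ and controlling the shape of $\tfrac{1}{2}I'(u)-p\beta_2 u^{p-1}$ via the boundary behavior and strict convexity of $I$ is the routine but essential remaining work.
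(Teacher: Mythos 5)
Your proposal is correct and follows essentially the same route as the paper: reduce to the variational formula of Theorem \ref{CD}, use Legendre duality to transfer the study of $I''$ to $K''$ so that the \textbf{Assumption} yields a unique degenerate point (your system $\ell_\beta''=\ell_\beta'''=0$ collapsing to $K'''K'+(p-2)(K'')^2=0$ is exactly the paper's identification of $u_0$ as the unique minimizer of $m(u)$), then define the transition curve as the equal-height locus of the two local maximizers inside the bistable region and read off the jump in the first derivatives and the divergence at the coalescence point. Your implicit-function-theorem computation $r'(\beta_1)=-(u_2-u_1)/(u_2^p-u_1^p)<0$ is a cleaner quantitative version of the paper's area-balancing argument for monotonicity, and the remaining bookkeeping you flag (exactly two competing maxima, the endpoint scaling) is precisely what the paper handles via the functions $f$ and $m$ and by deferring to Radin--Yin.
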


\begin{corollary}
\label{corollary}
For any allowed $H_2$, the parameter space $\{(\beta_1, \beta_2):
\beta_2\geq 0\}$ consists of a single phase with a first order
phase transition across the indicated curve $\beta_2=r(\beta_1)$
and a second order phase transition at the critical point
$(\beta_1^c, \beta_2^c)$, qualitatively like the gas/liquid transition in equilibrium materials.
\end{corollary}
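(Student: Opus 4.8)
The plan is to reduce the entire statement to an analysis of the one-dimensional variational problem supplied by Theorem \ref{CD}. Writing $\varphi(u;\beta)=\beta_1 u+\beta_2 u^p-\tfrac12 I(u)$, we have $\psi_\infty^\beta=\sup_u \varphi(u;\beta)$. First I would record that the supremum is attained in the open interval $(0,1)$: by Legendre duality $I'(u)=\theta$ with $K'(\theta)=u$, and the limits in Table \ref{table} give $I'(u)\to-\infty$ as $u\to 0^+$ and $I'(u)\to+\infty$ as $u\to 1^-$, so that $\partial_u\varphi>0$ near $0$ and $\partial_u\varphi<0$ near $1$. Thus every maximizer is an interior, smooth critical point, and the boundary behavior of $I$ (finite as for Bernoulli$(.5)$, or infinite as for Uniform$(0,1)$) is irrelevant to the analysis.

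Next I would parametrize the critical points, following the strategy of Radin--Yin and Chatterjee--Diaconis. Stationarity $\partial_u\varphi=0$ reads $\beta_1=g_{\beta_2}(u):=\tfrac12 I'(u)-p\beta_2 u^{p-1}$, so for fixed $\beta_2\ge 0$ the maximizers are the preimages of $\beta_1$ under $g_{\beta_2}$. Since $g_{\beta_2}'(u)=\tfrac12 I''(u)-p(p-1)\beta_2 u^{p-2}=-\partial_u^2\varphi(u)$, the map $g_{\beta_2}$ is strictly increasing exactly when $\varphi$ is strictly concave, in which case there is a unique non-degenerate maximizer and $\psi_\infty^\beta$ is analytic by the implicit function theorem. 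A transition can occur only where $g_{\beta_2}$ loses monotonicity, and its onset (the critical point) is the value of $\beta_2$ at which $g_{\beta_2}'$ first acquires a double zero, i.e. where $g_{\beta_2}'(u)=g_{\beta_2}''(u)=0$ simultaneously. Eliminating $\beta_2$ from these two equations yields $uI'''(u)=(p-2)I''(u)$, and substituting the duality identities $I''(u)=1/K''(\theta)$ and $I'''(u)=-K'''(\theta)/(K''(\theta))^3$ converts this exactly into the technical Assumption $K'''(\theta)K'(\theta)=-(p-2)(K''(\theta))^2$. By that Assumption this equation has a unique root $\theta^c$, hence a unique critical abscissa $u^c=K'(\theta^c)$ and a unique critical endpoint $(\beta_1^c,\beta_2^c)$ (after checking $\beta_2^c\ge 0$ and $u^c\in(0,1)$).

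For $\beta_2>\beta_2^c$ the function $g_{\beta_2}$ is N-shaped, so there is an interval of $\beta_1$ on which $\varphi$ has two local maxima $u_-<u_+$; I would locate the first-order curve $\beta_2=r(\beta_1)$ by the equal-height (Maxwell) condition $\varphi(u_-;\beta)=\varphi(u_+;\beta)$, across which the global maximizer jumps from $u_-$ to $u_+$. The envelope theorem gives $\partial_{\beta_1}\psi_\infty^\beta=u^*$ and $\partial_{\beta_2}\psi_\infty^\beta=(u^*)^p$ at the maximizer, so both first derivatives jump across the curve (the first-order transition), while off the curve the unique non-degenerate maximizer makes them analytic. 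Differentiating the equal-height relation along the curve and using stationarity yields the Clausius--Clapeyron slope $r'(\beta_1)=-(u_+-u_-)/(u_+^p-u_-^p)<0$, so the curve is decreasing. Finally, at $(\beta_1^c,\beta_2^c)$ the maximizer is degenerate, $\partial_u^2\varphi(u^c)=0$; differentiating stationarity gives $\partial_{\beta_1}u^*=-1/\partial_u^2\varphi(u^*)$ together with the analogous $\partial_{\beta_2}u^*=-p(u^*)^{p-1}/\partial_u^2\varphi(u^*)$, so $\partial_{\beta_1}^2\psi$, $\partial_{\beta_1}\partial_{\beta_2}\psi$ and $\partial_{\beta_2}^2\psi$ all diverge as $\partial_u^2\varphi(u^*)\to 0$, giving the second-order transition at the endpoint.

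The hard part will be the global bookkeeping in the third step: verifying that the equal-height locus is a single well-defined decreasing curve terminating exactly at $(\beta_1^c,\beta_2^c)$, that off this locus one local maximum is strictly dominant (so $u^*$ is genuinely analytic and globally unique, not merely locally so), and that no spurious stationary points intrude. This is precisely where the Assumption does the real work, since it forbids $g_{\beta_2}$ from developing more than one oscillation and thereby pins down a single transition curve; without it the same argument produces several such curves, all sharing the qualitative features above. A secondary technical point is confirming that the degeneracy at the critical point is exactly quartic, namely $\partial_u^2\varphi(u^c)=\partial_u^3\varphi(u^c)=0$ but $\partial_u^4\varphi(u^c)<0$, which is what forces the second derivatives to diverge rather than merely jump.
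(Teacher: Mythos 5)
Your proposal follows essentially the same route as the paper: the corollary is read off from Theorem \ref{phase}, whose proof is exactly the scalar maximization analysis you describe --- the duality identities $I''(u)K''(\theta)=1$ convert the double-zero condition for $\partial_u^2\varphi$ into the technical Assumption (the paper's functions $m(u)$ and $n(\theta)$ play the role of your $g_{\beta_2}'$), and the transition curve $r(\beta_1)$ is pinned down by the same equal-area/equal-height Maxwell construction inside the V-shaped region. The only real divergence is at the end, where the paper defers the jump and divergence statements for the derivatives of $\psi_\infty^\beta$ to the probability estimates of Radin--Yin, while you obtain them directly from the limiting variational formula via the envelope theorem and your Clausius--Clapeyron computation; that is a legitimate and somewhat more self-contained finish.
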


\begin{figure}
\centering
\includegraphics[clip=true, height=3in]{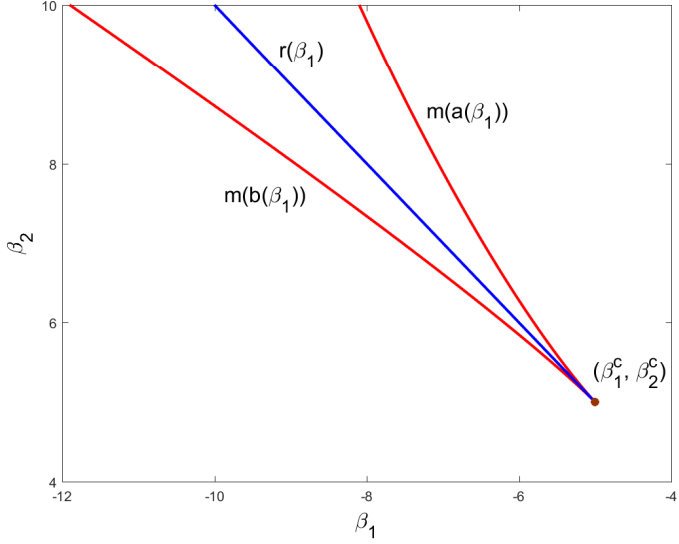}
\caption{The V-shaped region (with phase transition curve $r(\beta_1)$ inside) for the Beta$(2, 2)$ distribution in the $(\beta_1, \beta_2)$ plane. Graph drawn for $p=2$.} \label{Vshape}
\end{figure}

\vskip.1truein

\noindent \textit{Proof of Theorem \ref{phase}}
Let $p$ be the number of edges in $H_2$. Denote by $I(u)$ the Cram\'{e}r rate function associated with the probability measure $\mu$.
Define
\begin{equation}
\label{smax}
L(u; \beta_1, \beta_2) = \beta_1 u + \beta_2 u^{p} - \frac{1}{2} I(u)
\end{equation}
for $u \in [0,1]$. We consider the maximization problem for $L(u; \beta_1, \beta_2)$ on the interval $[0,1]$, where $-\infty < \beta_1 < \infty$ and $0 \le \beta_2 < \infty$ are parameters. We note that by Theorem \ref{CD}, the supremum should actually be taken over the domain of $I$, which might differ from $[0, 1]$ at the endpoints from the discussion following Lemma \ref{domain}. However, when the domain of $I$ does not include $0$ (or $1$), $L(0)$ (or $L(1)$) is negative infinity and so can not be the maximum. To locate the maximizers of $L(u)$, we examine the properties of $L'(u)$ and $L''(u)$,
\begin{equation}
L'(u) = \beta_1 + p\beta_2 u^{p-1} - \frac{1}{2} I'(u),
\end{equation}
\begin{equation*}
L''(u) = p(p-1)\beta_2 u^{p-2} - \frac{1}{2} I''(u).
\end{equation*}

Utilizing the duality principle for the Legendre transform between $I(u)$ and $K(\theta)$, we first analyze properties of $L''(u)$ on the interval $(0, 1)$. As a consequence of the Legendre transform,
\begin{equation}
\label{dual}
I(u) + K(\theta) = \theta u,
\end{equation}
where $\theta$ and $u$ are unique duals of each other. Taking derivatives, we find that
\begin{equation}
\label{duality}
u = K'(\theta) \hspace{.5cm} \text{ and } \hspace{.5cm} I''(u)K''(\theta) = 1.
\end{equation}
Consider the function
\begin{equation} \label{m} m(u) = \frac{I''(u)}{2p(p-1)u^{p-2}} \end{equation}
on $(0, 1)$. By (\ref{duality}), we may analyze the properties of $m(u)$ through the function
\begin{equation} n(\theta) = 2p(p-1)K''(\theta)\left(K'(\theta)\right)^{p-2}, \end{equation}
where $\theta \in \R$ and $m(u)n(\theta) = 1$. From the discussion following Lemma \ref{domain}, we recognize that
\begin{equation} \label{ntheta} \lim_{n \to -\infty} n(\theta) = 0, \end{equation}
\[ \lim_{n \to 0} n(\theta) = 2p(p-1)\Var(X)\left(\mathbb{E}(X)\right)^{p-2}, \]
\[ \lim_{n \to \infty} n(\theta) = 0, \]
where $X$ is a random variable distributed according to $\mu$. Since
\begin{equation}
n'(\theta)= 2p(p-1)\left(K'(\theta)\right)^{p-3} \left( K'''(\theta) K'(\theta) + (p-2)\left(K''(\theta)\right)^{2} \right)
\end{equation}
and $K'(\theta)>0$ always, under {\textbf{Assumption}} there exists a unique $\theta_0$ such that $n'(\theta_0)=0$. This unique global maximizer $\theta_0$ for $n(\theta)$ corresponds to a unique global minimizer for $m(u)$, which we denote by $u_0$. Using duality, $m(u)>0$ for all $u \in (0, 1)$ and grows unbounded on both ends. For $\beta_2 \le m(u_0)$, $L''(u) \le 0$ on $(0,1)$. For $\beta_2 > m(u_0)$, $L''(u) < 0$ for $0 < u < u_1$ and $u_2 < u < 1$ and $L''(u) > 0$ for $u_1 < u < u_2$, where the transition points $u_1$ and $u_2$ satisfy $L''(u_1)=L''(u_2)=0$. Sign properties of $L''(u)$ translate to monotonicity properties of $L'(u)$ over $(0, 1)$. For $\beta_2 \le m(u_0)$, $L'(u)$ is decreasing over $(0, 1)$. For $\beta_2 > m(u_0)$, $L'(u)$ is decreasing from $0$ to $u_1$, increasing from $u_1$ to $u_2$, and decreasing from $u_2$ to $1$. See Figure \ref{figure} for an illustrative plot of $n(\theta)$ and $m(u)$.

\begin{figure}
\begin{center}$
\begin{array}{cc}
\includegraphics[scale=.8]{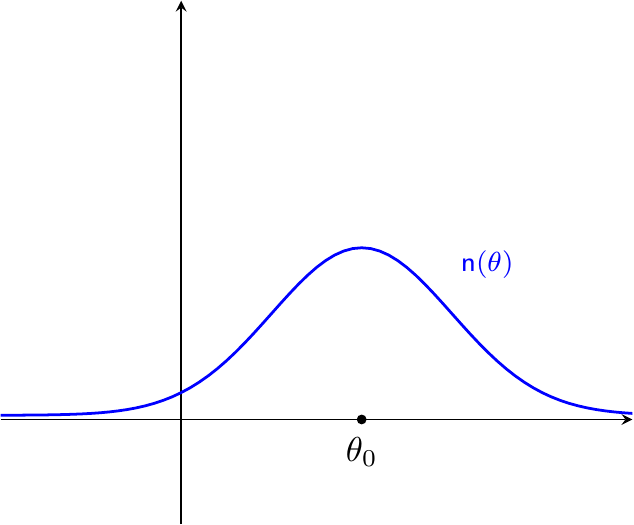} & \quad
\includegraphics[scale=.8]{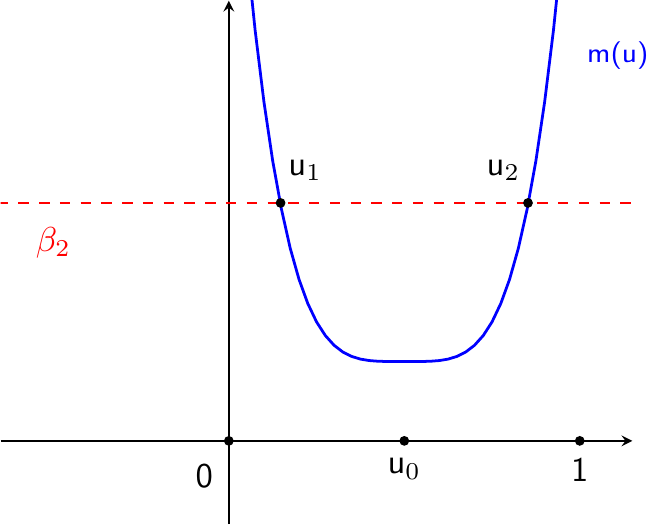}
\end{array}$
\end{center}
\caption{An illustrative plot of $n(\theta)$ and $m(u)$.}
\label{figure}
\end{figure}


The analytic properties of $L''(u)$ and $L'(u)$ entail analytic properties of $L(u)$ on the interval $[0, 1]$. Utilizing the duality of the Legendre transform (\ref{dual}) (\ref{duality}), $I(u)$ is a smooth convex function, $I'(0) = -\infty$ and $I'(1) = \infty$. Therefore $L'(0) = \infty$ and $L'(1) = -\infty$, so $L(u)$ cannot be maximized at $u = 0$ or $u = 1$. For $\beta_2 \le m(u_0)$, $L(u)$ is decreasing from $\infty$ at 0 to $-\infty$ at 1 passing the $u$-axis only once. This intercept, which we denote by $u^{\ast}$, is the unique global maximizer for $L(u)$. Now consider $\beta_2 > m(u_0)$. If $L'(u_1) \ge 0$, then $L'(u)$ has a unique zero greater than $u_2$ and so $L(u)$ has a unique global maximizer at $u^{\ast}>u_2$. If $L'(u_2) \le 0$, then $L'(u)$ has a unique zero less than $u_1$ and so $L(u)$ has a unique global maximizer at $u^{\ast}<u_1$.
Lastly, suppose that $L'(u_1) < 0 < L'(u_2)$. Then $L(u)$ has two local maximizers. Denote them by $u^{\ast}_1$ and $u^{\ast}_2$, with $0 < u^{\ast}_1 < u_1 < u_0 < u_2 < u^{\ast}_2 < 1$. See Figure \ref{figure2} for an illustrative plot of $L(u)$ in this case.

\begin{figure}
\begin{center}
\includegraphics[scale=1]{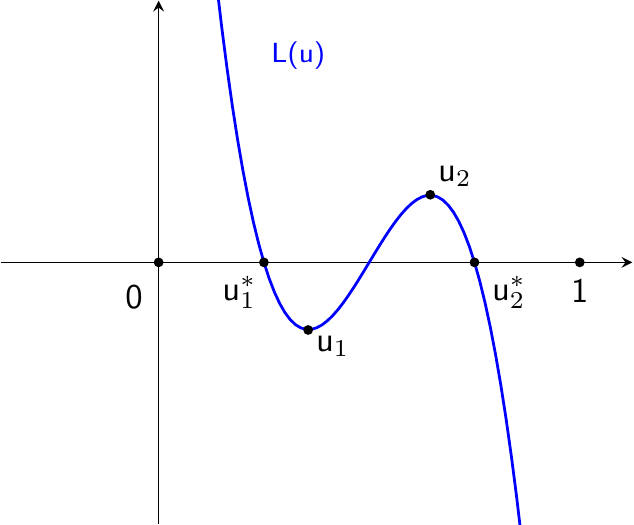}
\end{center}
\caption{An illustrative plot of $L(u)$ for $\beta_2>m(u_0)$.}
\label{figure2}
\end{figure}

Define
\begin{equation}
f(u) = \frac{uI''(u)}{2(p-1)} - \frac{1}{2}I'(u).
\end{equation}
Using $m(u_1)=m(u_2)=\beta_2$ (\ref{m}), $L'(u_1)=\beta_1+f(u_1)$ and $L'(u_2)=\beta_1+f(u_2)$. We compute
\begin{equation}
\label{fprime}
f'(u)=\frac{uI'''(u)-I''(u)(p-2)}{2(p-1)}=pu^{p-1}m'(u).
\end{equation}
As a consequence of the relation between $f'$ and $m'$, following the previous analysis for $m$, $f$ is decreasing on $(0, u_0)$ and increasing on $(u_0, 1)$. We check that similarly as $m$, $f$ grows unbounded on both ends. Taking $u\rightarrow 0$ corresponds to taking $\theta \rightarrow -\infty$ in the dual space (\ref{dual})(\ref{duality}), and the divergence is clear from the discussion following Lemma \ref{domain}. To see that $f(u)$ diverges as $u\rightarrow 1$, we utilize (\ref{fprime}). By the fundamental theorem of calculus,
\begin{equation}
f(u)-f(u_0)=\int_{u_0}^u f'(t)dt \geq pu_0^{p-1} \int_{u_0}^u m'(t)dt=pu_0^{p-1}(m(u)-m(u_0)),
\end{equation}
and grows to infinity as $u$ approaches $1$. Let $X$ be a random variable distributed according to $\mu$, we note some nice formulas for $f$ and $m$ for future reference:
\begin{equation}
\label{fm}
f(\mathbb{E}(X)) = \frac{\mathbb{E}(X)}{2(p-1)\Var(X)},
\end{equation}
\begin{equation*}
m(\mathbb{E}(X)) = \frac{1}{2p(p-1)\left(\mathbb{E}(X)\right)^{p-2} \Var(X)}.
\end{equation*}
In order for $L'(u_1) < 0$, we must have $\beta_1 < -f(u_1)$. Since $f$ attains an absolute minimum at $u_0$, $f(u_1) > f(u_0)$, and then $\beta_1 < -f(u_0)$. The only possible region in the $(\beta_1, \beta_2)$ plane where $L'(u_1) < 0 < L'(u_2)$ is thus bounded by $\beta_1 < -f(u_0)$ and $\beta_2 > m(u_0)$. Denote these two critical values for $\beta_1$ and $\beta_2$ by $\beta_1^c:= -f(u_0)$ and $\beta_2^c:= m(u_0)$.

Recall that $u_1<u_0<u_2$. By monotonicity of $f(u)$ on the intervals
$(0, u_0)$ and $(u_0, 1)$, there exist continuous functions
$a(\beta_1)$ and $b(\beta_1)$ of $\beta_1$, such that $L'(u_1)<0$
for $u_1>a(\beta_1)$ and $L'(u_2)>0$ for $u_2>b(\beta_1)$. As
$\beta_1\rightarrow -\infty$, $a(\beta_1)\rightarrow 0$ and
$b(\beta_1)\rightarrow 1$. $a(\beta_1)$ is an increasing function
of $\beta_1$, whereas $b(\beta_1)$ is a decreasing function, and
they satisfy $f(a(\beta_1))=f(b(\beta_1))=-\beta_1$. The
restrictions on $u_1$ and $u_2$ yield restrictions on $\beta_2$,
and we have $L'(u_1)<0$ for $\beta_2<m(a(\beta_1))$ and
$L'(u_2)>0$ for $\beta_2>m(b(\beta_1))$. As $\beta_1\rightarrow
-\infty$, $m(a(\beta_1))\rightarrow \infty$ and
$m(b(\beta_1))\rightarrow \infty$. $m(a(\beta_1))$ and
$m(b(\beta_1))$ are both decreasing functions of $\beta_1$, and
they satisfy $L'(u_1)=0$ when $\beta_2=m(a(\beta_1))$ and
$L'(u_2)=0$ when $\beta_2=m(b(\beta_1))$. As $L'(u_2)>L'(u_1)$ for
every $(\beta_1, \beta_2)$, the curve $m(b(\beta_1))$ lies
below the curve $m(a(\beta_1))$, and together they generate the
bounding curves of the $V$-shaped region in the $(\beta_1,
\beta_2)$ plane with corner point $(\beta_1^c, \beta_2^c)$ where
two local maximizers exist for $L(u)$. By (\ref{fprime}), for sufficiently negative values of $\beta_1$, $f(a(\beta_1))<m(a(\beta_1))$ and $f(b(\beta_1))>m(b(\beta_1))$, so the straight line $\beta_1=-\beta_2$ lies within this region.

Fix an arbitrary $\beta_1<\beta_1^c$. Then $L'(u)$ shifts upward as $\beta_2$ increases and downward as
$\beta_2$ decreases. As a result, as $\beta_2$ gets large, the
positive area bounded by the curve $L'(u)$ increases, whereas the
negative area decreases. By the fundamental theorem of calculus,
the difference between the positive and negative areas is the
difference between $L(u_2^*)$ and $L(u_1^*)$, which goes from
negative ($L'(u_2)=0$, $u_1^*$ is the global maximizer) to
positive ($L'(u_1)=0$, $u_2^*$ is the global maximizer) as
$\beta_2$ goes from $m(b(\beta_1))$ to $m(a(\beta_1))$. Thus there
must be a unique $\beta_2$: $m(b(\beta_1))<\beta_2<m(a(\beta_1))$
such that $u_1^*$ and $u_2^*$ are both global maximizers, and we
denote this $\beta_2$ by $r(\beta_1)$. The parameter values of $(\beta_1, r(\beta_1))$
are exactly the ones for which positive and negative areas bounded
by $L'(u)$ equal each other. An increase in $\beta_1$ induces an
upward shift of $L'(u)$, and may be balanced by a decrease in
$\beta_2$. Similarly, a decrease in $\beta_1$ induces a downward
shift of $L'(u)$, and may be balanced by an increase in $\beta_2$.
This justifies that $r(\beta_1)$ is monotonically decreasing in
$\beta_1$. See Figure \ref{Vshape}. Here we let $X$ be a random variable distributed according to Beta$(2, 2)$, then $\mathbb{E}(X)=1/2$ and $\Var(X)=1/20$. By Lemma \ref{KK}, $\theta_0=0$ and $u_0=\mathbb{E}(X)=1/2$, which by (\ref{fm}) gives $(\beta_1^c, \beta_2^c)=(-5, 5)$. Also see Figure 1 in \cite{RY} and Figure 1 in \cite{Yin2016} for related phase transition plots when the edge-weight distribution $\mu$ is respectively Bernoulli$(.5)$ and Uniform$(0, 1)$.

The rest of the proof follows as in the proof of the corresponding
result (Theorem 2.1) in Radin and Yin \cite{RY}, where some
probability estimates were used. A (jump) discontinuity in the
first derivatives of $\psi_\infty^\beta$ across the curve
$\beta_2=r(\beta_1)$ indicates a discontinuity in the expected
local densities, while the divergence of the second
derivatives of $\psi_\infty^\beta$ at the critical point
$(\beta_1^c, \beta_2^c)$ implies that the covariances of the local
densities go to zero more slowly than $1/n^2$. We omit
the proof details.

\begin{remark}
The maximization problem (\ref{lmax}) is solved at a unique value $u^*$ off the phase transition curve $\beta_2=r(\beta_1)$, and at two values $u_1^*$ and $u_2^*$ along the curve. As $\beta_1\rightarrow -\infty$ (resp. $\beta_2\rightarrow \infty$), $u_1^*\rightarrow 0$ and $u_2^*\rightarrow 1$. The jump from $u_1^*$ to $u_2^*$ is quite noticeable even for small parameter values of $\beta$. For example, taking $p=2$, $\beta_1=-8$, and $\beta_2=8$ in Beta$(2, 2)$, numerical computations yield that $u_1^* \approx 0.165$ and $u_2^* \approx 0.835$.
\end{remark}



\section{Universal asymptotics}
\label{asymptotics}
In this section we examine near degeneracy and universality in general edge-weighted exponential random graphs. All our findings in this section are derived based on the assumption that the non-degenerate probability measure $\mu$ for the edge weights is symmetric about the line $u=1/2$. We remark that near degeneracy and universality are expected even when the edge weights are not symmetrically distributed, except that the universal straight line gets shifted vertically from $\beta_2=-\beta_1$.

\begin{proposition}
\label{cor}
Consider a non-degenerate probability measure $\mu$ supported on $[0, 1]$ and symmetric about the line $u=1/2$. Take $H_1$ a single edge and $H_2$ a finite simple graph with
$p\geq 2$ edges. The phase transition curve
$\beta_2=r(\beta_1)$ lies above the straight line
$\beta_2=-\beta_1$ when $p\geq 3$, and is exactly the portion of the
straight line $\beta_2=-\beta_1$ ($\beta_1\leq -1/(4\Var(X))$ when $p=2$. Here $X$ is a random variable distributed according to $\mu$.
\end{proposition}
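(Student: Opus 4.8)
The plan is to exploit the symmetry of the rate function. By Proposition \ref{prop}, the hypothesis that $\mu$ is symmetric about $u=1/2$ gives $I(u)=I(1-u)$, and in particular $\mathbb{E}(X)=1/2$. I would parametrize the candidate line by $\beta_1=-c$, $\beta_2=c$ with $c>0$, so that $L(u;-c,c)=c(u^p-u)-\tfrac{1}{2}I(u)$. The first move is to compare $L$ with its reflection $L(1-u)$: using $I(u)=I(1-u)$ one finds $L(u)-L(1-u)=c\,\phi(u)$, where $\phi(u)=u^p-(1-u)^p+1-2u$ carries all of the $p$-dependence. One checks immediately that $\phi(1/2)=\phi(1)=0$ and that $\phi$ is anti-symmetric about $u=1/2$, i.e. $\phi(1-u)=-\phi(u)$.

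For $p=2$ the identity $\phi\equiv 0$ holds, so $L(u)=L(1-u)$: the objective is itself symmetric about $1/2$ on the line. Hence whenever $L$ has two local maximizers $u_1^*<u_2^*$ they must be reflections, $u_2^*=1-u_1^*$, with equal values $L(u_1^*)=L(u_2^*)$, which is exactly the defining condition of the phase transition. Thus each such line point lies on the curve, giving $r(\beta_1)=-\beta_1$. I would then fix the range using Lemma \ref{KK}, which yields $\theta_0=0$ and $u_0=\mathbb{E}(X)=1/2$; the formulas (\ref{fm}) then evaluate $\beta_1^c=-f(u_0)=-1/(4\Var(X))$ and $\beta_2^c=m(u_0)=1/(4\Var(X))=-\beta_1^c$, so the critical point itself lies on the line and two local maximizers persist precisely for $\beta_1\le -1/(4\Var(X))$.

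For $p\ge 3$ the crux is to show $\phi(u)<0$ on $(1/2,1)$. I would establish this from $\phi''(u)=p(p-1)\big(u^{p-2}-(1-u)^{p-2}\big)>0$ on $(1/2,1)$, so $\phi'$ is increasing there, together with $\phi'(1/2)=p\,2^{2-p}-2<0$ and $\phi'(1)=p-2>0$; hence $\phi$ first decreases then increases on $(1/2,1)$ while vanishing at both endpoints, forcing $\phi<0$ strictly in the interior. Consequently $L(u)<L(1-u)$ for every $u\in(1/2,1)$. Differentiating the reflection identity also gives $2L'(1/2)=c\,\phi'(1/2)<0$, so $u=1/2$ is not a maximizer either. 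It follows that the global maximizer of $L$ over $[0,1]$ must lie in $(0,1/2)$: any candidate in $[1/2,1)$ has a reflection of strictly larger value, and the endpoints are excluded by $L'(0^+)=\infty$, $L'(1^-)=-\infty$. On the line the system therefore sits strictly in the low-density phase. Since increasing $\beta_2$ with $\beta_1$ fixed raises $L$ preferentially at large $u$ — the value gap $L(u_2^*)-L(u_1^*)$ increasing through zero at $r(\beta_1)$, as established in the proof of Theorem \ref{phase} — the balance value $r(\beta_1)$ must exceed $-\beta_1$, so the curve lies strictly above the line (and, as a byproduct, $m(u_0)>f(u_0)$, i.e. the corner lies above the line as well).

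I expect the main obstacle to be the $p\ge 3$ direction, specifically converting the pointwise reflection inequality $L(u)<L(1-u)$ into the statement about $r(\beta_1)$ without first controlling how many local maximizers exist on the line. The clean resolution is precisely the symmetrization above: it locates the global maximizer in $(0,1/2)$ directly, bypassing any case analysis on the number of critical points, after which the monotonicity of the value gap in $\beta_2$ closes the argument. A secondary point worth verifying is that the possible unboundedness of $I$ at $0$ and $1$ (so that the effective domain may be $(0,1)$ rather than $[0,1]$) does not affect the location of the maximizer; this is immediate since in either case an interior maximizer in $(0,1/2)$ dominates the boundary behavior.
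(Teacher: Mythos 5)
Your proposal is correct and follows essentially the same route as the paper: restrict $L$ to the line $\beta_2=-\beta_1$, use the symmetry $I(u)=I(1-u)$ from Proposition \ref{prop} to compare $L(u)$ with $L(1-u)$ (your $\phi(u)<0$ on $(1/2,1)$ is exactly the paper's inequality $u-u^p<(1-u)-(1-u)^p$ on $(0,1/2)$ after reflection), identify the critical point via Lemma \ref{KK} and (\ref{fm}), and close with the value-gap monotonicity in $\beta_2$ from the proof of Theorem \ref{phase}. Your derivative analysis of $\phi$ actually supplies the "analytical calculations" the paper leaves implicit, but the argument is the same.
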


\begin{proof}
From the proof of Theorem \ref{phase}, there are two global
maximizers $u_1^*$ and $u_2^*$ for $L(u)$ along the phase
transition curve $\beta_2=r(\beta_1)$, $0<u_1^*<u_0<u_2^*<1$, where $u_0$ is the unique global minimizer for $m(u)$ (\ref{m}). By Lemma \ref{KK}, $u_0=1/2$ when $p=2$ and $u_0>1/2$ when $p>2$. Furthermore, the $y$-coordinate
$\beta_2^c$ of the critical point $(\beta_1^c, \beta_2^c)=(-f(u_0), m(u_0))$ is
always positive. On the straight line $\beta_1+\beta_2=0$, we
rewrite $L(u)=\beta_1(u-u^p)-I(u)/2$. By Proposition \ref{prop}, $I(u)$ is symmetric about the line $u=1/2$. First suppose
$p=2$. Since $I(u)$ and $u-u^2$ are both symmetric, two global maximizers $u_1^*$ and $u_2^*$ exist
for $L(u)$ and $(-f(u_0), m(u_0))=\left(-1/(4\Var(X)), 1/(4\Var(X))\right)$ by (\ref{fm}). Next consider the generic case $p\geq 3$. Analytical
calculations give that $u-u^p<(1-u)-(1-u)^p$ for
$0<u<1/2$. Since $I(u)$ is symmetric, this says that for
$\beta_1<0$ (resp. $\beta_2>0$), the global maximizer $u^*$ of
$L(u)$ satisfies $u^*\leq 1/2$ and so must be $u_1^*$. The conclusion readily follows.
\end{proof}

\begin{proposition}
\label{cor2}
Consider a non-degenerate probability measure $\mu$ supported on $[0, 1]$ and symmetric about the line $u=1/2$. Assume the associated Cram\'{e}r rate function (\ref{cramer}) is bounded on $[0, 1]$ (i.e. $I(0)=I(1)$ is finite). Take $H_1$ a single edge and $H_2$ a finite simple graph with $p\geq 2$ edges. The phase transition curve
$\beta_2=r(\beta_1)$ displays a universal asymptotic behavior as $\beta_1 \to -\infty$, specifically,
\begin{equation}
\label{erd}
\lim_{\beta_1 \to -\infty} \left|r(\beta_1) + \beta_1\right| = 0.
\end{equation}
\end{proposition}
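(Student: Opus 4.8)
The plan is to reduce first to the case $p\geq 3$: when $p=2$ the curve is exactly the line $\beta_2=-\beta_1$ by Proposition \ref{cor}, so (\ref{erd}) is immediate. For $p\geq 3$ Proposition \ref{cor} already gives $r(\beta_1)+\beta_1>0$, so it suffices to produce a matching upper bound tending to $0$. I would measure this gap through the Maxwell (equal-height) characterization of the curve: for fixed $\beta_1<\beta_1^c$ and $\beta_2$ in the interior of the $V$-shaped region, set $D(\beta_2)=L(u_2^*;\beta_1,\beta_2)-L(u_1^*;\beta_1,\beta_2)$, the difference of the two local maximal values of (\ref{smax}), where $0<u_1^*<u_0<u_2^*<1$. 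Since the $u_i^*$ are critical points, the envelope theorem gives $D'(\beta_2)=(u_2^*)^p-(u_1^*)^p>0$, the transition curve is precisely $\{D=0\}$, and on the line $\ell:\beta_2=-\beta_1$ the smaller maximizer is global (the strict inequality $u-u^p<(1-u)-(1-u)^p$ used in the proof of Proposition \ref{cor}), so $D(-\beta_1)<0$.

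The analytic heart is a rate estimate on $\ell$. Write $\hat u_1,\hat u_2$ for the two maximizers at $(\beta_1,-\beta_1)$; since the inflection points $u_1,u_2$ solving $m(u_i)=-\beta_1$ satisfy $u_1\to 0$, $u_2\to 1$ as $\beta_1\to-\infty$ (from the shape of $m$), we have $\hat u_1<u_1\to 0$ and $\hat u_2>u_2\to 1$. I first record the elementary lemma that boundedness of $I$ forces $\lim_{a\to 0^+}aI'(a)=0$: by convexity $I(a)-I(0)=\int_0^a I'(t)\,dt\leq aI'(a)\leq 0$, and the left side vanishes because $I(0)$ is finite, so the squeeze applies. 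On $\ell$ the stationarity condition is $\beta_1\bigl(1-pu^{p-1}\bigr)=\tfrac12 I'(u)$, whence
\begin{equation}
\beta_1\hat u_1=\frac{\hat u_1 I'(\hat u_1)}{2\bigl(1-p\hat u_1^{\,p-1}\bigr)},\qquad \beta_1(1-\hat u_2)=\frac{(1-\hat u_2)I'(\hat u_2)}{2\bigl(1-p\hat u_2^{\,p-1}\bigr)}.
\end{equation}
The numerators tend to $0$ by the lemma---for the second I use the symmetry $I'(\hat u_2)=-I'(1-\hat u_2)$ of Proposition \ref{prop} to write $(1-\hat u_2)I'(\hat u_2)=-(1-\hat u_2)I'(1-\hat u_2)$---while the denominators tend to $2$ and $2(1-p)\neq 0$. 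Hence $\beta_1\hat u_1\to 0$ and $\beta_1(1-\hat u_2)\to 0$. With $g(u):=u-u^p$, so that $g(\hat u_1)=\hat u_1(1-\hat u_1^{p-1})$ and $g(\hat u_2)=(p-1)(1-\hat u_2)+O((1-\hat u_2)^2)$, these yield $\beta_1 g(\hat u_i)\to 0$; combined with $I(\hat u_2)-I(\hat u_1)\to I(1)-I(0)=0$ we obtain $D(-\beta_1)=\beta_1\bigl(g(\hat u_2)-g(\hat u_1)\bigr)-\tfrac12\bigl(I(\hat u_2)-I(\hat u_1)\bigr)\to 0$.

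To finish, observe that throughout $\beta_2\in[-\beta_1,r(\beta_1)]$ (an interval inside the $V$-region, by monotonicity of the maximizers in $\beta_2$) both $u_1^*\to 0$ and $u_2^*\to 1$, so $D'(\beta_2)=(u_2^*)^p-(u_1^*)^p\geq \tfrac12$ for all sufficiently negative $\beta_1$. The mean value theorem applied between $D(-\beta_1)<0$ and $D(r(\beta_1))=0$ then gives $|D(-\beta_1)|=D'(\xi)\bigl(r(\beta_1)+\beta_1\bigr)\geq\tfrac12\bigl(r(\beta_1)+\beta_1\bigr)$, so $0<r(\beta_1)+\beta_1\leq 2\,|D(-\beta_1)|\to 0$, proving (\ref{erd}).

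The step I expect to be the main obstacle is the pair of rate estimates $\beta_1\hat u_1\to 0$ and $\beta_1(1-\hat u_2)\to 0$. Attempting them directly on the transition curve is circular, since there $\beta_2$ is an independent unknown coupled to $u_1^*$ and the stationarity relation alone only reproduces tautologies; the resolution is to carry out the estimate on $\ell$, where $\beta_2=-\beta_1$ removes the extra unknown, and to isolate the single genuinely analytic input $\lim_{a\to 0^+}aI'(a)=0$. This is exactly where the hypothesis that $I$ be bounded (i.e. $I(0)=I(1)$ finite) is indispensable---it fails for distributions such as Uniform$(0,1)$ where $I$ diverges at the endpoints---so care must be taken that the argument nowhere assumes more about $I$ than convexity, smoothness on $(0,1)$, symmetry, and finiteness at the endpoints.
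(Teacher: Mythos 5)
Your proof is correct, but it takes a genuinely different route from the paper's. The paper fixes $\delta>0$, works on the shifted line $\beta_2=-\beta_1+\delta$, and shows by a soft comparison argument (writing $L=F+G$ with $G(u)=\delta u^p-I(u)/2$ bounded and continuous, and using that $F(u)=\beta_1(u-u^p)$ is very negative on $[\eta,1-\eta]$ and nonpositive near $0$) that for sufficiently negative $\beta_1$ the global maximizer is forced into $(1-\eta,1]$, hence equals $u_2^*$; combined with Proposition \ref{cor} this traps $r(\beta_1)$ in $[-\beta_1,-\beta_1+\delta]$. You instead stay on the line $\ell$ itself, quantify the Maxwell gap $D(-\beta_1)=L(\hat u_2)-L(\hat u_1)$, show it tends to $0$ via the key estimate $aI'(a)\to 0$ (which is where finiteness of $I(0)$ enters for you, just as it enters the paper's proof through boundedness of $G$), and convert the gap into a bound on $r(\beta_1)+\beta_1$ using $D'(\beta_2)=(u_2^*)^p-(u_1^*)^p\to 1$ and the mean value theorem. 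Your version is longer but strictly more informative: it yields the explicit rate $0<r(\beta_1)+\beta_1\leq 2\left|D(-\beta_1)\right|$, with $\left|D(-\beta_1)\right|$ controlled by $\hat u_1 I'(\hat u_1)$, $(1-\hat u_2)I'(\hat u_2)$ and $I(0)-I(\hat u_i)$, whereas the paper's $\delta$-argument gives only the bare limit. Two small points you should make explicit: (i) continuity of $I$ at the endpoints of its domain (needed both for $I(a)-I(0)\to 0$ in your lemma and for $I(\hat u_2)-I(\hat u_1)\to 0$) follows from lower semicontinuity of the Legendre transform together with convexity and $I(1/2)=0$, and is not automatic from finiteness alone; (ii) the envelope-theorem differentiation of $D$ requires the two local maximizers to persist and vary smoothly over the whole interval $\beta_2\in[-\beta_1,r(\beta_1)]$, which holds because that interval sits inside the cross-section $(m(b(\beta_1)),m(a(\beta_1)))$ of the $V$-region once $\beta_1$ is negative enough for $\ell$ to have entered it.
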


\begin{proof}
Let $\beta_2 = -\beta_1 + \delta$ with $\delta>0$ fixed. Define $F(u; \beta_1) = \beta_1 (u-u^p)$ and $G(u; \delta) = \delta u^p - I(u)/2$ so that $L(u; \beta_1, \beta_2) = F(u; \beta_1) + G(u; \delta)$ by (\ref{smax}). We will show, for sufficiently negative $\beta_1$, that the global maximizer $u^*$ of $L(u)$ equals $u_2^*$. Together with Proposition \ref{cor}, this implies that for these $\beta_1$, $-\beta_1\leq r(\beta_1)\leq -\beta_1+\delta$, which will prove the desired limit.

Under our assumption, $-I(u)$ is a continuous symmetric function that increases on $(0, 1/2)$ and decreases on $(1/2, 1)$, with a maximum attained at $u=1/2$ and $-I(1/2)=0$. Denote by $C:=-I(0)/2=-I(1)/2$ so that $C$ is finite and negative and $G(0)=C$. Recall that $0<u_1^*<u_0<u_2^*<1$, where $u_1^*$ and $u_2^*$ are two local maximizers for $L(u)$ and $u_0 \geq 1/2$ is the unique global minimizer for $m(u)$ (\ref{m}) that does not depend on $\beta_1$ and $\beta_2$. Rigorously, it may be that only one local maximizer $u_1^*$ or $u_2^*$ exist for $L(u)$, but this does not affect our argument below. From the continuity and boundedness of $G$ on $[0, 1]$, there exists $\eta \in (0, 1-u_0)$ such that if $0\leq u<\eta$ then $G(u)-C<\delta/2$. Since $u-u^p=u(1-u^{p-1})>0$ on $(0, 1)$ and vanishes at the endpoints $0$ and $1$, there exists $\beta<0$ such that for all $\beta_1<\beta$ and $u \in [\eta, 1-\eta]$, $F(u)<C-\delta$ and therefore $L(u)<C-\delta+G(u)<C=L(0)$, so $u^* \in [0, \eta) \cup (1-\eta, 1]$. Similarly, using that $F(u) \leq 0$ for all $\beta_1<0$ and all $u\in [0, \eta)$, we have $L(u)\leq G(u)<C+\delta/2<C+\delta=L(1)$ so $u^* \in (1-\eta, 1]$. Since $u_1^*<u_0<1-\eta$, this says that $u^*=u_2^*$.
\end{proof}

Propositions \ref{cor} and \ref{cor2} have advanced our understanding of
phase transitions in edge-weighted exponential random graphs, yet some fundamental questions remain unanswered. As explained in Section \ref{maximization}, a typical graph sampled from the exponential model looks like an Erd\H{o}s-R\'{e}nyi graph $G(n, u)$ in the large $n$ limit, where the asymptotic edge presence probability $u(\beta_1, \beta_2) \rightarrow 0$ or $1$ is prescribed according to the maximization problem (\ref{lmax}). However, the speed of $u$ towards these two degenerate states is not at
all clear. When a typical graph is sparse ($u\rightarrow 0$),
how sparse is it? When a typical graph is nearly complete
$(u\rightarrow 1)$, how complete is it? Can we give an explicit
characterization of the near degenerate graph structure as a
function of the parameters? The following Theorems \ref{asymp1} and \ref{asymp2} are dedicated
towards these goals. Theorem \ref{asymp1} shows that $\theta$, the dual of the Erd\H{o}s-R\'enyi parameter $u$, displays universal asymptotic behavior in the sparse region of the parameter space ($\beta_1<-\beta_2$ and $\beta_2\geq 0$) whereas $u$ itself depends on the specific edge-weight distribution $\mu$. Theorem \ref{asymp2} provides a corresponding result in the nearly complete region of the parameter space ($\beta_1>-\beta_2$ and $\beta_2\geq 0$), showing that the dual $\theta$ again displays universal asymptotic behavior whereas the Erd\H{o}s-R\'enyi parameter $u$ still depends on the edge-weight distribution $\mu$.

\begin{theorem}
\label{asymp1}
Consider a non-degenerate probability measure $\mu$ supported on $[0, 1]$ and symmetric about the line $u=1/2$. Take $H_1$ a single edge and $H_2$ a finite simple graph with $p\geq 2$ edges. Let $\beta_1<-\beta_2$ and $\beta_2\geq 0$. For large $n$ and $(\beta_1, \beta_2)$ sufficiently far away from the origin, a typical graph drawn from the model looks like an Erd\H{o}s-R\'{e}nyi graph $G(n, u)$, where the edge presence probability $u$ depends on the distribution $\mu$, but its dual $\theta$ universally satisfies $\theta \asymp 2\beta_1$.
\end{theorem}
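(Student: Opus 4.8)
The plan is to reduce the statement to a deterministic analysis of the maximizer of $L(u;\beta_1,\beta_2)$ in (\ref{smax}) and of its Legendre dual. By Theorem \ref{gen} a typical sample is asymptotically $G(n,u)$ with $u$ a maximizer of (\ref{lmax}), so the probabilistic ``for large $n$'' part is inherited verbatim from Theorems \ref{CD}--\ref{gen}, and it suffices to track $u$ and its dual $\theta$ as $(\beta_1,\beta_2)$ recedes to infinity inside the sparse region $\beta_1<-\beta_2$, $\beta_2\geq 0$ (note $\beta_1<0$ there). First I would write the stationarity condition $L'(u)=\beta_1+p\beta_2 u^{p-1}-\tfrac12 I'(u)=0$ and substitute the duality relations (\ref{dual})(\ref{duality}), namely $I'(u)=\theta$ and $u=K'(\theta)$, to convert it into the fixed-point identity
\[ \theta = 2\beta_1 + 2p\beta_2\, K'(\theta)^{p-1}. \]
Since Proposition \ref{cor} places the phase curve $r(\beta_1)$ on or above the line $\beta_2=-\beta_1$, the entire sparse region lies strictly below the transition, so the global maximizer is the small branch $u^\ast=u_1^\ast$ of the proof of Theorem \ref{phase}; equivalently $\theta$ is the smallest solution of the identity.

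The heart of the argument is to show $\theta\to-\infty$, which I would establish by a trapping estimate exploiting that $\beta_2<-\beta_1=|\beta_1|$ throughout the sparse region together with $K'(\theta)\to 0$ as $\theta\to-\infty$ (Table \ref{table}). Fix $M_0<0$ with $p\,K'(M_0)^{p-1}<1$ and set $\kappa:=1-p\,K'(M_0)^{p-1}\in(0,1)$. For $\theta\in[2\beta_1\kappa,M_0]$ one has $K'(\theta)\le K'(M_0)$, whence
\[ \theta-\bigl(2\beta_1+2p\beta_2 K'(\theta)^{p-1}\bigr) > \theta - 2\beta_1\bigl(1-p\,K'(M_0)^{p-1}\bigr) = \theta - 2\beta_1\kappa \ge 0, \]
so the fixed-point map stays below the diagonal there; combined with its being above the diagonal at $2\beta_1$, the smallest solution must satisfy $2\beta_1<\theta<2\beta_1\kappa$, and in particular $\theta\to-\infty$. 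Feeding this back gives $K'(\theta)\to 0$, and the correction term then becomes negligible against $\beta_1$: from the identity, $\left|\theta/(2\beta_1)-1\right| = p\beta_2 K'(\theta)^{p-1}/|\beta_1| < p\,K'(\theta)^{p-1}\to 0$, which is exactly $\theta\asymp 2\beta_1$. The companion fact $u=K'(\theta)\to 0$, with $\theta\asymp 2\beta_1$ but $K'$ itself distribution-dependent, records that the Erd\H{o}s-R\'enyi parameter is non-universal while its dual is universal.

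The main obstacle is precisely the step $\theta\to-\infty$: the naive bounds $2\beta_1\le\theta<2\beta_1+2p\beta_2$ only confine $\theta$ to a region whose right endpoint may be positive, so a one-shot estimate fails. The resolution is the two-stage bootstrap above --- a crude trap using the uniform decay $K'(-\infty)=0$ to force $\theta$ to $-\infty$, then a self-improving substitution that upgrades the ratio from merely lying in $(\kappa,1)$ to converging to $1$. I would emphasize that the strict inequality $\beta_2<|\beta_1|$ defining the sparse region is exactly what keeps the correction $2p\beta_2K'(\theta)^{p-1}$ controllable; on the line $\beta_2=-\beta_1$ this margin is lost, consistent with that line being the universal asymptote of the transition curve (Proposition \ref{cor2}). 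One should also confirm that the relevant branch is genuinely the smallest fixed point for every $(\beta_1,\beta_2)$ in the region --- including where $L$ has a single critical point --- which follows from the monotonicity structure of $L'$ established in the proof of Theorem \ref{phase}.
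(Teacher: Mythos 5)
Your proposal is correct and follows the same backbone as the paper's proof: pass to the dual via the stationarity condition $\beta_1+p\beta_2(K'(\theta))^{p-1}=\theta/2$, establish $\theta\to-\infty$, and then read off $\theta\asymp 2\beta_1$ from the vanishing of the correction term $pK'(\theta)^{p-1}$. Where you differ is in how the key limit $\theta\to-\infty$ is obtained. The paper writes $\beta_1=a\beta_2$ with $a<-1$, invokes Proposition \ref{cor} (the maximizer in the sparse region is the small branch $u_1^*\le 1/2$) together with the earlier maximization analysis to assert $u\to 0$, hence $\theta\to-\infty$, and then concludes $\theta/(2\beta_2)\to a$ along rays. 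You instead prove $\theta\to-\infty$ directly by a fixed-point trapping argument: the smallest fixed point of $\theta\mapsto 2\beta_1+2p\beta_2 K'(\theta)^{p-1}$ is squeezed into $[2\beta_1,\,2\beta_1\kappa)$ using $\beta_2<|\beta_1|$ and the decay $K'(-\infty)=0$, followed by a bootstrap giving the uniform bound $\left|\theta/(2\beta_1)-1\right|<pK'(\theta)^{p-1}$. This buys you a self-contained, quantitative estimate valid uniformly over the sparse region rather than along rays, at the cost of having to verify (as you correctly flag) that the global maximizer corresponds to the smallest fixed point, which does follow from the monotonicity structure of $L'$ in the proof of Theorem \ref{phase} and from Proposition \ref{cor} placing the sparse region below the transition curve. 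One minor shortfall: to substantiate the claim that $u$ itself is non-universal, the paper exhibits two distributions (Bernoulli$(.5)$ and Uniform$(0,1)$) with genuinely different asymptotics $e^{2\beta_1}$ versus $-1/(2\beta_1)$; merely remarking that $u=K'(\theta)$ with $K'$ distribution-dependent does not by itself rule out a universal limit, so you should include such a comparison.
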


\begin{proof}
Let $\beta_1=a\beta_2$ with $a<-1$. Resorting to Legendre duality, (\ref{lmax}) gives a condition on $\theta$, the dual of $u$:
\begin{equation}
\label{condition}
\beta_1 + p\beta_2 (K'(\theta))^{p-1} = \frac{1}{2}\theta.
\end{equation}
By Proposition \ref{cor}, $u\rightarrow 0$ for $(\beta_1, \beta_2)$ sufficiently far away from the origin, which corresponds to $\theta \rightarrow -\infty$ in the dual space. From Table \ref{table}, $K'(\theta) \rightarrow 0$ as $\theta \rightarrow -\infty$, we have
\begin{equation}
\frac{\theta}{2\beta_2} = a + p(K'(\theta))^{p-1} \rightarrow a.
\end{equation}
The universal asymptotics of $\theta \asymp 2\beta_1$ is verified.

We claim that $u$ on the other hand depends on the specific distribution $\mu$. We will derive the asymptotics of $u$ in two special cases, Bernoulli$(.5)$ and Uniform$(0, 1)$. In both cases, $u=K'(\theta)$ by Legendre duality. For Bernoulli$(.5)$,
\begin{equation}
K'(\theta)=\frac{e^\theta}{1+e^\theta} \asymp e^\theta \asymp e^{2\beta_1}.
\end{equation}
While for Uniform$(0, 1)$,
\begin{equation}
K'(\theta)=\frac{e^\theta}{e^\theta-1}-\frac{1}{\theta} \asymp -\frac{1}{\theta} \asymp -\frac{1}{2\beta_1}.
\end{equation}
\end{proof}

\begin{theorem}
\label{asymp2}
Consider a non-degenerate probability measure $\mu$ supported on $[0, 1]$ and symmetric about the line $u=1/2$. Assume the associated Cram\'{e}r rate function (\ref{cramer}) is bounded on $[0, 1]$ (i.e. $I(0)=I(1)$ is finite). Take $H_1$ a single edge and $H_2$ a finite simple graph with $p\geq 2$ edges. Let $\beta_1>-\beta_2$ and $\beta_2\geq 0$. For large $n$ and $(\beta_1, \beta_2)$ sufficiently far away from the origin, a typical graph drawn from the model looks like an Erd\H{o}s-R\'{e}nyi graph $G(n, u)$, where the edge presence probability $u$ depends on the distribution $\mu$, but its dual $\theta$ universally satisfies $\theta \asymp 2(\beta_1+p\beta_2)$.
\end{theorem}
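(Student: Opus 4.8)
The plan is to run the argument of Theorem \ref{asymp1} at the opposite end of the interval, replacing the sparse limit $\theta \to -\infty$ by the nearly-complete limit $\theta \to +\infty$ and reading the corresponding entries of Table \ref{table}. Write $\beta_1 = a\beta_2$ with $a > -1$, so that $(\beta_1, \beta_2)$ lies in the nearly-complete region $\beta_1 > -\beta_2$, and send $\beta_2 \to \infty$. Exactly as in Theorem \ref{asymp1}, the supremum in (\ref{lmax}) is attained at an interior stationary point $u^*$, and Legendre duality ($u = K'(\theta)$, $I'(u) = \theta$) converts $L'(u^*) = 0$ into the condition (\ref{condition}),
\begin{equation*}
\beta_1 + p\beta_2\left(K'(\theta)\right)^{p-1} = \frac{1}{2}\theta,
\end{equation*}
with $\theta$ the dual of $u^*$.

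First I would pin down that the relevant maximizer is the large root $u^* = u_2^*$ and that $u_2^* \to 1$, equivalently $\theta \to +\infty$. This is precisely the step that uses the hypothesis $I(0) = I(1) < \infty$: boundedness of $I$ lets me invoke Proposition \ref{cor2}, which locates the phase transition curve asymptotically along $\beta_2 = -\beta_1$ and identifies the global maximizer strictly above that line (the region $a > -1$) as $u_2^*$; together with the Remark following Theorem \ref{phase}, that $u_2^* \to 1$ as $\beta_2 \to \infty$, this yields $\theta \to +\infty$.

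With $\theta \to +\infty$ in hand, Table \ref{table} gives $K'(\theta) \to 1$, hence $\left(K'(\theta)\right)^{p-1} \to 1$. Dividing the stationarity condition by $2\beta_2$ then yields
\begin{equation*}
\frac{\theta}{2\beta_2} = a + p\left(K'(\theta)\right)^{p-1} \longrightarrow a + p,
\end{equation*}
so that $\theta \asymp 2(a+p)\beta_2 = 2(\beta_1 + p\beta_2)$, the asserted universal asymptotic. To see that $u$ itself is not universal, I would use the duality relation $u = K'(\theta)$: while $\theta$ is pinned to $2(\beta_1 + p\beta_2)$ independently of $\mu$, the value $u = K'(\theta)$ is read off the distribution-specific function $K'$. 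For the admissible example Bernoulli$(.5)$ this gives $1 - u = 1/(1 + e^{\theta}) \asymp e^{-\theta} \asymp e^{-2(\beta_1 + p\beta_2)}$, exhibiting the dependence on $\mu$.

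The main obstacle is the first step: certifying that the maximizer is the large root $u_2^*$ tending to $1$ rather than the small root $u_1^*$ tending to $0$. This is what forces the boundedness assumption, since the clean determination of the phase boundary and of which root is global in Proposition \ref{cor2} relies on $C := -I(0)/2 = -I(1)/2$ being finite; once $\theta \to +\infty$ is secured, the remaining computation is the same short limit as in Theorem \ref{asymp1}.
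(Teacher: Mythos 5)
Your proposal is correct and follows essentially the same route as the paper's own proof: set $\beta_1=a\beta_2$ with $a>-1$, use Legendre duality to obtain the stationarity condition (\ref{condition}), invoke Proposition \ref{cor2} (where the boundedness of $I$ enters) to conclude $u\to 1$ and hence $\theta\to+\infty$, then read $K'(\theta)\to 1$ from Table \ref{table} to get $\theta/(2\beta_2)\to a+p$. The only cosmetic difference is that the paper illustrates the non-universality of $u$ with both Bernoulli$(.5)$ and Uniform$(0,1)$, while you give only the Bernoulli case.
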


\begin{proof}
Let $\beta_1=a\beta_2$ with $a>-1$. Resorting to Legendre duality, (\ref{lmax}) gives condition (\ref{condition}) on $\theta$, the dual of $u$. By Proposition \ref{cor2}, $u\rightarrow 1$ for $(\beta_1, \beta_2)$ sufficiently far away from the origin, which corresponds to $\theta \rightarrow \infty$ in the dual space. From Table \ref{table}, $K'(\theta) \rightarrow 1$ as $\theta \rightarrow \infty$, we have
\begin{equation}
\frac{\theta}{2\beta_2} = a + p(K'(\theta))^{p-1} \rightarrow a+p.
\end{equation}
The universal asymptotics of $\theta \asymp 2(\beta_1+p\beta_2)$ is verified.

We claim that $u$ on the other hand depends on the specific distribution $\mu$. We will derive the asymptotics of $u$ in two special cases, Bernoulli$(.5)$ and Uniform$(0, 1)$. In both cases, $u=K'(\theta)$ by Legendre duality. For Bernoulli$(.5)$,
\begin{equation}
K'(\theta)=\frac{e^\theta}{1+e^\theta} \asymp 1-e^{-\theta} \asymp 1-e^{-2(\beta_1+p\beta_2)}.
\end{equation}
While for Uniform$(0, 1)$,
\begin{equation}
K'(\theta)=\frac{e^\theta}{e^\theta-1}-\frac{1}{\theta} \asymp 1-\frac{1}{\theta} \asymp 1-\frac{1}{2(\beta_1+p\beta_2)}.
\end{equation}
\end{proof}

See Tables \ref{table2} and \ref{table3}. Even for $\beta$ with small magnitude, the asymptotic tendency of the optimal $\theta$ (hence the optimal $u$) is quite evident. Here we take $p=2$. The asymptotic
characterizations of $u$ obtained in Theorems \ref{asymp1} and \ref{asymp2} make
possible a deeper analysis of the asymptotics of the limiting
normalization constant $\psi_\infty^\beta$ of the exponential model in the following Theorems \ref{lim1} and \ref{lim2}. Interestingly, universality is observed only in the nearly complete region ($\beta_1>-\beta_2$ and $\beta_2\geq 0$) of the parameter space as proven in Theorem \ref{lim2}, but not the sparse region ($\beta_1<-\beta_2$ and $\beta_2\geq 0$) as shown in Theorem \ref{lim1}.

Before stating the theorems and their proofs, we offer a possible explanation for this discrepancy. By Theorem \ref{CD},
\begin{equation}
\label{again}
\psi_\infty^\beta=\beta_1 u+\beta_2 u^p-\frac{1}{2}I(u),
\end{equation}
where $u$ is chosen so that the above equation is maximized. In statistical physics, $\beta_1 u+\beta_2 u^p$ is commonly referred to as the energy contribution and $-I(u)/2$ as the entropy contribution, with the latter being largely dependent on the specific edge-weight distribution $\mu$. In the sparse region of the parameter space, the entropy contribution is at least as important as the energy contribution and, for many common distributions such as Bernoulli$(.5)$ and Uniform$(0, 1)$ actually dominates the energy contribution. Conversely, in the nearly complete region of the parameter space, the energy contribution dominates the entropy contribution. This leads to universality of $\psi_\infty^\beta$ in the nearly complete region but not the sparse region.

\begin{table}
\begin{center}
\begin{tabular}{cccccc}
$\beta_1$ & $\beta_2$ & $\theta_{\text{opt}}$ & $u_{\text{opt}}$ & $\exp\left(2\beta_1\right)$ & $1-\exp\left(-2(\beta_1+p\beta_2)\right)$ \\
\hline \hline \\
$-2$ & $-4$ & $-4.23$ & $0.014$ & $0.018$  & \\
$1$ & $1$ & $5.99$ & $0.998$ & & $0.998$ \\ \\
\end{tabular}
\end{center}
\caption{Asymptotic comparison for Bernoulli$(.5)$ near degeneracy.} \label{table2}
\end{table}

\begin{table}
\begin{center}
\begin{tabular}{cccccc}
$\beta_1$ & $\beta_2$ & $\theta_{\text{opt}}$ & $u_{\text{opt}}$ & $-1/(2\beta_1)$ & $1-1/\left(2(\beta_1+p\beta_2)\right)$ \\
\hline \hline \\
$-4$ & $-6$ & $-10.32$ & $0.097$ & $0.125$  & \\
$3$ & $2$ & $13.40$ & $0.925$ & & $0.929$ \\ \\
\end{tabular}
\end{center}
\caption{Asymptotic comparison for Uniform$(0, 1)$ near degeneracy.} \label{table3}
\end{table}

\begin{theorem}
\label{lim1}
Consider a non-degenerate probability measure $\mu$ supported on $[0, 1]$ and symmetric about the line $u=1/2$. Take $H_1$ a single edge and $H_2$ a finite simple graph with $p\geq 2$ edges. Let $\beta_1<-\beta_2$ and $\beta_2\geq 0$. For $(\beta_1, \beta_2)$ sufficiently far away from the origin, the limiting normalization constant $\psi_\infty^\beta$ depends on the distribution $\mu$.
\end{theorem}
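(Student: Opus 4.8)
The plan is to reduce the maximization formula for $\psi_\infty^\beta$ to a form in which the dependence on $\mu$ is transparent, and then exhibit two distributions whose sparse-region asymptotics disagree, exactly as was done for the Erd\H{o}s-R\'enyi parameter $u$ in Theorem \ref{asymp1}. First I would start from Theorem \ref{CD} in the form (\ref{again}), $\psi_\infty^\beta=\beta_1 u+\beta_2 u^p-\frac{1}{2} I(u)$, where $u$ is the (interior) global maximizer of $L$ in (\ref{smax}). Using Legendre duality $I(u)=\theta u-K(\theta)$ together with the first-order condition (\ref{condition}), namely $\tfrac{1}{2}\theta=\beta_1+p\beta_2 u^{p-1}$, the linear-in-$u$ terms cancel and the expression collapses to
\begin{equation}
\psi_\infty^\beta=(1-p)\beta_2 u^p+\tfrac{1}{2} K(\theta).
\end{equation}
This identity isolates an energy-type term $(1-p)\beta_2 u^p$ and a cumulant term $\tfrac{1}{2}K(\theta)$ that is manifestly tied to $\mu$ through $K(\theta)=\log\int e^{\theta x}\mu(dx)$.

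Next I would argue that in the sparse region the energy-type term is asymptotically negligible, so that the cumulant term governs the limit. By Proposition \ref{cor} and Theorem \ref{asymp1}, far from the origin along $\beta_1=a\beta_2$ with $a<-1$ one has $u\to 0$ and $\theta\to-\infty$ with $\theta\asymp 2\beta_1$. Writing $\beta_2 u^p\asymp \tfrac{\theta}{2a}\left(K'(\theta)\right)^p$ and using $K'(\theta)\to 0$ from Table \ref{table}, the product tends to $0$ whenever $K'(\theta)=o(|\theta|^{-1/p})$; this decay holds for the standard distributions (and more generally $K'(\theta)\asymp c/|\theta|$ for measures with a density near the origin, while $K'(\theta)$ is exponentially small when $\mu$ carries an atom at $0$). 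Hence the entire distribution-dependence of the limit is carried by $\lim_{\theta\to-\infty}K(\theta)$. By dominated convergence $e^{\theta x}\to\mathbf{1}_{\{x=0\}}$ on $[0,1]$, so $\tfrac{1}{2}K(\theta)\to\tfrac{1}{2}\log\mu(\{0\})$, which is finite exactly when $\mu$ has mass at the left endpoint and is $-\infty$ otherwise; by symmetry $\mu(\{0\})=\mu(\{1\})$.

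Finally I would make the dependence explicit on the two running examples. For Bernoulli$(.5)$, $K(\theta)=\log\frac{1+e^\theta}{2}\to-\log 2$, and since $u\asymp e^{2\beta_1}$ the energy term vanishes exponentially, giving $\psi_\infty^\beta\to-\tfrac{1}{2}\log 2$, a finite constant. For Uniform$(0,1)$, $K(\theta)=\log\frac{e^\theta-1}{\theta}\asymp-\log(-\theta)\asymp-\log(-2\beta_1)$, while $u\asymp-1/(2\beta_1)$ makes $\beta_2 u^p$ of order $\beta_2^{1-p}\to 0$, so $\psi_\infty^\beta\to-\infty$, diverging logarithmically. The two limits disagree, which establishes that $\psi_\infty^\beta$ is not universal in the sparse region; it also matches the heuristic that here the entropy contribution dominates the energy contribution.

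The main obstacle is the second step: controlling the energy-type term $\beta_2 u^p$ for a \emph{generic} symmetric $\mu$, since the rate at which $u=K'(\theta)\to 0$ is itself distribution-dependent and must be shown to beat the linear growth of $\beta_2$ in $|\theta|$. For the two explicit distributions this is a direct computation, which already suffices to prove the stated non-universality; a fully general argument would instead require a Tauberian/Laplace estimate for $K'(\theta)$ as $\theta\to-\infty$ in terms of the behavior of $\mu$ near $0$.
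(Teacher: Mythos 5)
Your proposal follows essentially the same route as the paper's proof: both reduce $\psi_\infty^\beta$ via Legendre duality and the stationarity condition (\ref{condition}) to the identity $\psi_\infty^\beta=(1-p)\beta_2\left(K'(\theta)\right)^p+\frac{1}{2}K(\theta)$ (the paper's (\ref{sim})) and then invoke $\theta\asymp 2\beta_1$ from Theorem \ref{asymp1}. Your additional step, namely evaluating $\lim_{\theta\to-\infty}K(\theta)=\log\mu(\{0\})$ by dominated convergence and contrasting Bernoulli$(.5)$ (finite limit $-\frac{1}{2}\log 2$) with Uniform$(0,1)$ (logarithmic divergence), is correct and actually makes the claimed dependence on $\mu$ more explicit than the paper, which stops at the asymptotic formula and relegates the negligibility of the energy term to the subsequent remark.
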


\begin{proof}
Let $\beta_1=a\beta_2$ with $a<-1$. Theorem \ref{CD} gives (\ref{again}), where $u$ is chosen so that the equation is maximized and $u \rightarrow 0$ for $(\beta_1, \beta_2)$ sufficiently far away from the origin. Resorting to Legendre duality, this gives
\begin{equation}
\psi_\infty^\beta=\beta_1 K'(\theta)+\beta_2 (K'(\theta))^p-\frac{1}{2} \left(\theta K'(\theta)-K(\theta)\right),
\end{equation}
where $\theta$ is the dual of $u$ and approaches $-\infty$ when $(\beta_1, \beta_2)$ diverge. By (\ref{condition}),
\begin{equation}
\label{sim}
\psi_\infty^\beta=(1-p)\beta_2 (K'(\theta))^p+\frac{1}{2}K(\theta).
\end{equation}
Since $\beta_2 \asymp \theta / (2a)$ as $\theta \rightarrow -\infty$ from Theorem \ref{asymp1}, asymptotically we have
\begin{eqnarray}
\psi_\infty^\beta &\asymp& \frac{1-p}{2a} \theta (K'(\theta))^p+\frac{1}{2}K(\theta)\\
&\asymp& (1-p) \beta_2 \left(K'(2\beta_1)\right)^p+\frac{1}{2}K(2\beta_1). \nonumber
\end{eqnarray}
\end{proof}

\begin{remark}
Many common distributions including Bernoulli$(.5)$ and Uniform$(0, 1)$ satisfy $\theta K'(\theta) / K(\theta) \rightarrow 0$ as $\theta \rightarrow -\infty$, in which case the asymptotics in Theorem \ref{lim1} may be further reduced to $\psi_\infty^\beta \asymp K(\theta)/2 \asymp K(2\beta_1)/2$.
\end{remark}

\begin{theorem}
\label{lim2}
Consider a non-degenerate probability measure $\mu$ supported on $[0, 1]$ and symmetric about the line $u=1/2$. Assume the associated Cram\'{e}r rate function (\ref{cramer}) is bounded on $[0, 1]$ (i.e. $I(0)=I(1)$ is finite). Take $H_1$ a single edge and $H_2$ a finite simple graph with $p\geq 2$ edges. Let $\beta_1>-\beta_2$ and $\beta_2\geq 0$. For $(\beta_1, \beta_2)$ sufficiently far away from the origin, the limiting normalization constant $\psi_\infty^\beta$ universally satisfies $\psi_\infty^\beta \asymp \beta_1+\beta_2$.
\end{theorem}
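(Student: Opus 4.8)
The plan is to mirror the derivation used for Theorem \ref{lim1}, exploiting the fact that in the nearly complete region the relevant asymptotics of $K$ become universal. First I would invoke Theorem \ref{CD} in the form (\ref{again}) and pass to the dual variable $\theta$ via Legendre duality, writing $u=K'(\theta)$ and $I(u)=\theta K'(\theta)-K(\theta)$, so that
\[
\psi_\infty^\beta = \beta_1 K'(\theta) + \beta_2 (K'(\theta))^p - \frac{1}{2}\bigl(\theta K'(\theta) - K(\theta)\bigr).
\]
Using the stationarity condition (\ref{condition}) exactly as in Theorem \ref{lim1}, the first and third terms combine and the expression collapses to the clean form (\ref{sim}),
\[
\psi_\infty^\beta = (1-p)\beta_2 (K'(\theta))^p + \frac{1}{2}K(\theta).
\]

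Next I would place the parameters on a ray $\beta_1=a\beta_2$ with $a>-1$ and let $\beta_2\to\infty$, so that $(\beta_1,\beta_2)$ recedes from the origin inside the nearly complete region. By Theorem \ref{asymp2} this forces $u\to 1$, equivalently $\theta\to+\infty$, with $\theta\asymp 2(\beta_1+p\beta_2)$. From Table \ref{table} we have $K'(\theta)\to 1$, hence $(K'(\theta))^p\to 1$; the decisive universal ingredient is that $K(\theta)\asymp\theta$ as $\theta\to\infty$. This last fact holds for every admissible $\mu$ because $K(\theta)=\int_0^\theta K'(s)\,ds$ together with $K'(s)\to 1$ forces $K(\theta)/\theta\to 1$, with no dependence on the distribution. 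This is precisely the feature absent in the sparse regime of Theorem \ref{lim1}, where $\theta\to-\infty$ and $K(\theta)$ either converges to a $\mu$-dependent finite limit or diverges, producing non-universality.

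Finally I would assemble the estimate. Writing $\beta_1+\beta_2=(a+1)\beta_2$, which is comparable to $\beta_2$ and tends to $+\infty$ since $a>-1$, I would divide (\ref{sim}) by $\beta_1+\beta_2$. The first term contributes $(1-p)\beta_2(K'(\theta))^p/((a+1)\beta_2)\to(1-p)/(a+1)$, while the second contributes $\frac{1}{2}K(\theta)/((a+1)\beta_2)\to(a+p)/(a+1)$ after inserting $K(\theta)\asymp\theta\asymp 2(a+p)\beta_2$. Their sum is $((1-p)+(a+p))/(a+1)=(a+1)/(a+1)=1$, giving $\psi_\infty^\beta\asymp\beta_1+\beta_2$.

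The main obstacle is conceptual rather than computational: one must recognize that the universality is driven entirely by the boundary behavior $K(\theta)/\theta\to 1$ as $\theta\to\infty$, and then verify that the two $\mu$-dependent pieces—the factor $(K'(\theta))^p$ and the exact constant hidden in $K(\theta)\asymp\theta$—both wash out in the limit, leaving coefficients that telescope precisely to $\beta_1+\beta_2$. Confirming that the $\beta_2$-coefficients cancel exactly (the $\pm p$ terms disappearing) is the one place where a sign or index slip would break the clean conclusion, so that is where I would be most careful.
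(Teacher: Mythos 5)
Your argument is correct, but it is not the route the paper's proof takes; it is instead essentially the argument the paper gives in the \emph{Remark} immediately following Theorem \ref{lim2}. The paper's own proof is a two-line direct estimate in the primal variable: writing $\psi_\infty^\beta=\beta_1 u+\beta_2 u^p-\tfrac{1}{2}I(u)$ as in (\ref{again}), the boundedness hypothesis on $I$ makes the entropy term $-\tfrac{1}{2}I(u)$ uniformly bounded, while $u\to 1$ (from Proposition \ref{cor2}) forces the energy term $\beta_1 u+\beta_2 u^p$ to be asymptotic to $\beta_1+\beta_2$ along any ray $\beta_1=a\beta_2$ with $a>-1$; no passage to the dual variable is needed. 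Your proof instead runs through (\ref{sim}), the stationarity condition (\ref{condition}), and the universal boundary behavior $K(\theta)/\theta\to 1$ as $\theta\to+\infty$, and the bookkeeping $(1-p)\beta_2+(a+p)\beta_2=(1+a)\beta_2=\beta_1+\beta_2$ is exactly what the paper records in the remark. What your route buys --- and what the paper explicitly points out there --- is that the boundedness of $I$ is then needed only to guarantee $u\to 1$ (via Proposition \ref{cor2} and Theorem \ref{asymp2}), not for the asymptotics of $\psi_\infty^\beta$ itself, so the universal behavior persists for distributions such as Uniform$(0,1)$ whose rate function is unbounded. The trade-off is that your argument is longer and leans on Theorem \ref{asymp2}, whereas the paper's proof exploits the boundedness hypothesis to bypass the dual variable entirely.
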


\begin{proof}
Let $\beta_1=a\beta_2$ with $a>-1$. Similarly as in the proof of Theorem \ref{lim1}, Theorem \ref{CD} gives (\ref{again}), where $u$ is chosen so that the equation is maximized and $u \rightarrow 1$ for $(\beta_1, \beta_2)$ sufficiently far away from the origin. Since the first two terms diverge to $\beta_1+\beta_2$ while the last term is bounded by our assumption, the claim easily follows.
\end{proof}

\begin{remark}
The boundedness assumption on $I$ in Theorem \ref{lim2} is only used as a sufficient condition to ensure that $u \rightarrow 1$ for $\beta_1>-\beta_2$ in the upper half-plane and far away from the origin and is not necessary for the derivation of the universal asymptotics for $\psi_\infty^\beta$. Indeed, since $\theta \asymp 2(\beta_1+p\beta_2)$ by Theorem \ref{asymp2}, using $K(\theta) / \theta \asymp K'(\theta) \asymp 1$ in (\ref{sim}), we have
\begin{equation}
\psi_\infty^\beta \asymp (1-p)\beta_2 +\left(\beta_1+p\beta_2\right) \asymp \beta_1+\beta_2.
\end{equation}
This universal asymptotic phenomenon is observed for example in Uniform$(0, 1)$, whose associated Cram\'{e}r rate function $I$ is not bounded.
\end{remark}

In the nearly complete region of the parameter space ($\beta_1>-\beta_2$ and $\beta_2\geq 0$) examined in Theorems \ref{asymp2} and \ref{lim2}, the ``asymptotically equivalent'' Erd\H{o}s-R\'{e}nyi parameter $u$ depends on the edge-weight distribution $\mu$ yet the limiting normalization constant $\psi_\infty^\beta$ for the exponential random graph displays universal asymptotic behavior. Since the Erd\H{o}s-R\'enyi model is not an exact statistical physics analog for the exponential model, this seemingly controversial discrepancy does not come as a surprise. We work out the details for standard $2$-parameter families with Bernoulli$(.5)$ edge-weight distribution below, and the calculation may be extended to $k$-parameter families with general edge-weight distributions.

Suppose the exponential random graph $G_n$ is indistinguishable in the large $n$ limit from an Erd\H{o}s-R\'{e}nyi random graph $G(n, u)$ in the graphon sense. In other words, for large $n$, the $2$-parameter exponential random graph $G_n$ is ``equivalent'' to a simplified $1$-parameter Erd\H{o}s-R\'{e}nyi random graph with probability distribution
\begin{equation}
\PR_n^{\beta'}(G_n)=\exp(n^2 (\beta' t(H_1,
G_n)-\psi_n^{\beta'})),
\end{equation}
where $u$ and $\beta'$ are related by $u=e^{2\beta'}/(1+e^{2\beta'})$. The limiting normalization constant
$\psi_\infty^{\beta'}$ for the Erd\H{o}s-R\'{e}nyi model is given by
\begin{equation}
\psi_\infty^{\beta'}=\log(1+e^{2\beta'})/2=-\log(1-u)/2,
\end{equation}
and the limiting normalization constant
$\psi_\infty^\beta$ for the exponential random graph model is given by
\begin{equation}
\psi_\infty^{\beta}=\beta_1 u+\beta_2 u^p-\frac{1}{2}u\log u-\frac{1}{2}(1-u)\log(1-u).
\end{equation}
Utilizing the fact that $u$ satisfies
\begin{equation}
\beta_1+p\beta_2(u)^{p-1}-\frac{1}{2}\log \frac{u}{1-u}=0,
\end{equation}
we have
\begin{equation}
\label{last}
\psi_\infty^\beta=(1-p)\beta_2u^p-\log(1-u)/2.
\end{equation}
This shows that $\psi_\infty^\beta$ (for the exponential random graph model) and $\psi_\infty^{\beta'}$ (for the corresponding Erd\H{o}s-R\'{e}nyi model) do not coincide unless $\beta_2=0$. The difference is particularly noticeable in the nearly complete region, where $\beta' \asymp \beta_1+p\beta_2$ when $\mu$ is Bernoulli$(.5)$, and so $\psi_\infty^{\beta'} \asymp \beta_1+p\beta_2$ but $\psi_\infty^\beta \asymp \beta_1+\beta_2$.

\section*{Acknowledgements}
The authors are very grateful to the anonymous referees for the invaluable suggestions that greatly improved the quality of this paper.


\begin{thebibliography}{30}

\bibitem{Aldous1} Aldous, D.: Representations for partially exchangeable arrays of random
variables. J. Multivariate Anal. 11, 581-598 (1981)

\bibitem{AL} Aldous, D., Lyons, R.: Processes on unimodular random networks. Electron. J. Probab. 12, 1454-1508 (2007)

\bibitem{AS} Aldous, D., Steele, J.M.: The objective method: Probabilistic combinatorial optimization and local weak convergence. In: Kesten, H. (ed.) Probability on Discrete Structures, pp. 1-72. Springer, Berlin (2004)

\bibitem{AZ2}
Aristoff, D., Zhu, L.: Asymptotic structure and singularities in
constrained directed graphs. Stochastic Process. Appl. 125,
4154-4177 (2015)

\bibitem{BS} Benjamini, I., Schramm, O.: Recurrence of distributional limits of finite planar graphs. Electron. J. Probab. 6, 1-13 (2001)

\bibitem{Besag} Besag, J.: Statistical analysis of non-lattice data. J. R. Stat. Soc. Ser. D. Stat. 24, 179-195 (1975)

\bibitem{BBS} Bhamidi, S., Bresler G., Sly A.: Mixing time of exponential random graphs. Ann. Appl. Probab. 21, 2146-2170 (2011)

\bibitem{BCCZ1} Borgs, C., Chayes, J., Cohn, H., Zhao, Y.:
An $L^p$ theory of sparse graph convergence I. Limits, sparse
random graph models, and power law distributions. arXiv: 1401.2906
(2014)

\bibitem{BCCZ2} Borgs, C., Chayes, J., Cohn, H., Zhao, Y.:
An $L^p$ theory of sparse graph convergence II. LD convergence,
quotients, and right convergence. arXiv: 1408.0744 (2014)

\bibitem{BCLSV1} Borgs, C., Chayes, J., Lov\'{a}sz, L., S\'{o}s, V.T., Vesztergombi, K.:
Counting graph homomorphisms. In: Klazar, M., Kratochvil, J.,
Loebl, M., Thomas, R., Valtr,  P. (eds.) Topics in Discrete
Mathematics, Volume 26, pp. 315-371. Springer, Berlin (2006)

\bibitem{BCLSV2} Borgs, C., Chayes, J.T., Lov\'{a}sz, L., S\'{o}s, V.T., Vesztergombi, K.:
Convergent sequences of dense graphs I. Subgraph frequencies,
metric properties and testing. Adv. Math. 219, 1801-1851 (2008)

\bibitem{BCLSV3} Borgs, C., Chayes, J.T., Lov\'{a}sz, L., S\'{o}s, V.T., Vesztergombi, K.:
Convergent sequences of dense graphs II. Multiway cuts and
statistical physics. Ann. of Math. 176, 151-219 (2012)

\bibitem{Brown} Brown, L.D.: Fundamentals of Statistical Exponential Families with Applications in Statistical Decision Theory. Lecture Notes-Monograph Series, Volume 9. Institute of Mathematical Statistics, Hayward (1986)

\bibitem{CD2} Chatterjee, S., Dembo, A.: Nonlinear large deviations. Adv. Math. 299, 396-450 (2016)

\bibitem{CD1} Chatterjee, S., Diaconis, P.: Estimating and understanding exponential random graph models. Ann. Statist. 41, 2428-2461 (2013)

\bibitem{CV} Chatterjee, S., Varadhan, S.R.S.: The large deviation principle for
the Erd\H{o}s-R\'{e}nyi random graph. European J. Combin. 32,
1000-1017 (2011)

\bibitem{CD} Cranmer, S.J., Desmarais, B.A.: Inferential network analysis with exponential random graph models. Pol. Anal. 19, 66-86 (2011)

\bibitem{Fienberg1} Fienberg, S.E.: Introduction to papers on the
modeling and analysis of network data. Ann. Appl. Statist. 4, 1-4
(2010)

\bibitem{Fienberg2} Fienberg, S.E.: Introduction to papers on the
modeling and analysis of network data II. Ann. Appl. Statist. 4,
533-534 (2010)

\bibitem{FS} Frank, O., Strauss, D.: Markov graphs. J. Amer. Statist. Assoc. 81, 832-842 (1986)

\bibitem{Handcock} Handcock, M.S.: Assessing degeneracy in statistical models of social networks. Working Paper 39, Center for Statistics and the Social Sciences, Univ. Washington, Seattle, WA. (2003)

\bibitem{Hoover} Hoover, D.: Row-column exchangeability and a generalized model for
probability. In: Koch, G., Spizzichino, F. (eds.) Exchangeability
in Probability and Statistics, pp. 281-291. North-Holland,
Amsterdam (1982)

\bibitem{Hunter} Hunter, D.R., Handcock, M.S., Butts, C.T., Goodreau, S.M., Morris, M.: ergm: A package to fit, simulate and diagnose exponential-family models for networks. J. Statist. Softw. 24, 1-29 (2008)

\bibitem{KRRS} Kenyon, R., Radin, C., Ren, K., Sadun, L.:
Multipodal structure and phase transitions in large constrained
graphs. arXiv: 1405.0599 (2014)

\bibitem{KY} Kenyon, R., Yin, M.: On the asymptotics of constrained exponential random graphs. J. Appl. Probab. 54, 165-180 (2017)

\bibitem{Lov} Lov\'{a}sz, L.: Large Networks and Graph Limits.
American Mathematical Society, Providence (2012)

\bibitem{LS} Lov\'{a}sz, L., Szegedy B.: Limits of
dense graph sequences. J. Combin. Theory Ser. B. 96, 933-957 (2006)

\bibitem{Lyons} Lyons, R.: Asymptotic enumeration of spanning trees. Combin. Probab. Comput. 14, 491-522 (2005)

\bibitem{LZ1} Lubetzky, E., Zhao, Y.: On replica symmetry of large deviations in random graphs. Random Structures Algorithms 47, 109-146 (2015)

\bibitem{LZ2} Lubetzky, E., Zhao, Y.: On the variational problem for upper tails in sparse random graphs. Random Structures Algorithms 50, 420-436 (2017)

\bibitem{M} Mukherjee, S.: Consistence estimation in the two star exponential random graph model. arXiv: 1310.4526 (2013)

\bibitem{Newman} Newman, M.: The structure and function of
complex networks. SIAM Rev. 45, 167-256 (2003)


\bibitem{RRS}
Radin, C., Ren, K., Sadun, L.: The asymptotics of large
constrained graphs. J. Phys. A: Math. Theor. 47, 175001 (2014)

\bibitem{RS1}
Radin, C., Sadun, L.: Phase transitions in a complex network. J.
Phys. A: Math. Theor. 46, 305002 (2013)

\bibitem{RS2}
Radin, C., Sadun, L.: Singularities in the entropy of
asymptotically large simple graphs. J. Stat. Phys. 158, 853-865
(2015)

\bibitem{RY} Radin, C., Yin, M.: Phase transitions in
exponential random graphs. Ann. Appl. Probab. 23, 2458-2471 (2013)

\bibitem{RFZ} Rinaldo, A., Fienberg, S.E., Zhou, Y.: On the geometry of discrete exponential families with application to exponential random graph models. Electron. J. Stat. 3, 446-484 (2009)

\bibitem{SPRH} Snijders, T.A.B., Pattison, P., Robins, G.L., Handcock, M.: New specifications for exponential random graph models. Sociol. Methodol. 36, 99-153 (2006)

\bibitem{WF} Wasserman, S., Faust, K.: Social Network Analysis: Methods and Applications. Cambridge University
Press, Cambridge (2010)

\bibitem{Yin2013} Yin, M.: Critical phenomena in exponential random graphs. J. Stat. Phys. 153, 1008-1021 (2013)

\bibitem{Yin} Yin, M.: A detailed investigation into near degenerate exponential random graphs. J. Stat.
Phys. 164, 241-253 (2016)

\bibitem{Yin2016} Yin, M.: Phase transitions in edge-weighted exponential random graphs. arXiv:
1607.04084 (2016)

\bibitem{ZRM} Zia, R.K.P., Redish, E.F., McKay, S.: Making sense of
the Legendre transform. Amer. J. Phys. 77, 614-622 (2009)

\end{thebibliography}
\end{document}